\newtheorem{theorem}{Theorem}[section]
\theoremstyle{definition}
\newtheorem{definition}[theorem]{Definition}
\newtheorem{example}[theorem]{Example}
\newtheorem{corollary}[theorem]{Corollary}
\newtheorem{lem}[theorem]{Lemma}
\theoremstyle{remark}
\numberwithin{equation}{section}
\begin{document}
\title{ $\alpha$-prime hyperideals in a multiplicative hyperring }

\author{Mahdi Anbarloei}
\address{Department of Mathematics, Faculty of Sciences,
Imam Khomeini International University, Qazvin, Iran.
}

\email{m.anbarloei@sci.ikiu.ac.ir}


\subjclass[2010]{ 20N20}


\keywords{ $\alpha$-prime hyperideal, $\alpha$-radical, $\alpha$-nilpotent, $\alpha$-nilradical.}

\begin{abstract}
The notion of multiplicative hyperrings is an important class of algebraic hyperstructures which generalize rings where the multiplication is a hyperoperation,
while the addition is an operation . Let $R$ be a commutative  multiplicative hyperring and $\alpha \in End(R)$. A proper hyperideal $I$ of $R$ is called $\alpha$-prime if $x \circ y \subseteq I$ for some $x, y \in R$ then $x \in I$ or $\alpha(y) \in I$.  Indeed, the $\alpha$-prime hyperideals  are  a new generalization of prime hyperideals. In this paper, we aim to  study $\alpha$-prime hyperideals and give the basic properties of this new type of hyperideals. 
\end{abstract}
\maketitle
\section{Introduction}
Algebraic hyperstructures are a suitable generalization of classical algebraic structures. This  theory  has been introduced  by Marty in 1934 during the $8^{th}$ Congress of the Scandinavian Mathematicians \cite{marty}. He defined the hypergroups as a generalization of groups. Afterwards,  many researchers  have been worked on this new ﬁeld of modern algebra and developed it \cite{f1,f2,f3,f4,f5,f6,f7,f8,f9}. In an algebraic hyperstructure, the composition of two elements is a set, while in a classical algebraic structure, the composition of two elements is an element. Similar to hypergroups, hyperrings are algebraic structures more general than rings, subsitutiting both or only one of the binary operations of addition and multiplication by hyperoperations. The hyperrings
were introduced and studied by many authors  \cite{ameri3, ameri4, f12, f13}. Krasner introduced a type of the hyperring  where
addition is a hyperoperation and multiplication is an ordinary binary operation. Such a hyperring is
called a Krasner hyperring \cite{f11}. Mirvakili and Davvaz introduced  $(m,n)$-hyperrings in \cite{f17} and they defined Krasner $(m,n)$-hyperrings as a subclass of $(m,n)$-hyperrings and as a generalization of Krasner hyperrings in \cite{f18}.
The notion of multiplicative hyperrings is an important class of algebraic hyperstructures that
generalize rings, initiated the study  by Rota in 1982 which the multiplication is a hyperoperation,
while the addition is an operation \cite{f14}. There exists a
general type of hyperrings that both the addition and multiplication are hyperoperations \cite{f15}.  Ameri and Kordi have studied Von Neumann regularity in multiplicative hyperrings \cite{ameri5}. Moreover, they introduced the concept of clean multiplicative hyperrings and studied  some topological
concepts to realize clean elements of a multiplicative hyperring by clopen subsets
of its Zariski topology\cite{ameri6}. The
notions such as (weak)zero divisor, (weak)nilpotent and unit in an arbitrary commutative hyperrings were introduced in \cite{ameri2}. Some equivalence relations - called fundamental relations - play important
roles in the the theory of algebraic hyperstructures. The fundamental
relations are one of the most important and interesting concepts in algebraic
hyperstructures that ordinary algebraic structures are derived from algebraic
hyperstructures by them. For more details about hyperrings and fundamental relations on
hyperrings see \cite{x4, f6, x2, x3, x1, marty, x5, f15}.
Prime ideals and primary ideals are two of the most important structures in commutative algebra. The notion of primeness of hyperideal in a multiplicative hyperring was conceptualized by Procesi and rota in \cite{f16}. Dasgupta extended the prime and primary hyperideals in multiplicative hyperrings in \cite{das}. 
Beddani and Messirdi \cite{bed} introduced a generalization of prime ideals called
2-prime ideals and this idea is further generalized by Ulucak and et. al.  \cite{gu}. In \cite{anb}, we investigated $\delta$-primary hyperideals in
a Krasner $(m, n)$-hyperring which unify prime hyperideals and primary hyperideals. $\alpha$-prime ideals in a commutative ring with nonzero identity have been introduced
and studied by Akray and Mohammad-Salih in \cite{akray}.

In this paper we consider the class of multiplicative hyperring as a hyperstructure
$(R,+,\circ)$, where $(R,+)$ is an abelian group, $(R,\circ)$ is a semihypergroup
and the hyperoperation "$\circ$" is distributive with respect to the operation
$" + "$. In this paper we introduce and study the notion of $\alpha$-prime hyperideals of a multiplicative hyperring  which is  a new generalization to  prime hyperideals. Several properties of them are provided. For example we show (Theorem \ref{18}) that  if $R$ is a multiplicative hyperring such that it has zero absorbing property and $\langle 0 \rangle$ is a prime hyperideal of $R$, then \[Nil_\alpha(R) = \bigcap_{I {\text \  is \ \alpha-prime \  of \  R}}I.\]
It is shown (Theorem \ref{17}) that the hyperideal  $I$ of $R$ is $\alpha$-prime if and only if $I/ Ker \alpha$ is prime in $R/Ker \alpha$. We show (Theorem \ref{19}) that thehyperideal  $I$ is $\alpha$-prime if and only if $R/I$ is an $\alpha$-integral hyperdomain. Also, we investigate
the stability of $\alpha$-prime hyperideals in some hyperring-theoric constructions.
\section{Preliminaries}
In this section we give some defnitions and results  which we
need to develop our paper.

A hyperoperation "$\circ $" on nonempty set $G$ is a mapping of $G \times G$ into the family of all nonempty subsets of $G$. Assume that "$\circ $" is a hyperoperation on $G$. Then $(G,\circ)$ is called hypergroupoid. The hyperoperation on $G$ can be extended to  subsets of $G$ as follows. Let $X,Y$ be subsets of $G$ and $g \in G$, then 
\[X \circ Y =\cup_{x \in X, y \in Y}x \circ y, \ \  X \circ g=X \circ \{g\}. \]
A hypergroupoid $(G, \circ)$ is called  a semihypergroup if for all $x,y,z \in G$, $(x \circ y) \circ z=x \circ (y \circ z)$, which is associative. A semihypergroup is said to be a hypergroup if  $g \circ G=G=G \circ g$ for all  $g \in G$. A nonempty subset $H$ of a semihypergroup $(G,\circ)$ is called a
subhypergroup if for all $x \in H$ we have $x \circ H=H=H \circ x$. A commutative hypergroup $(G,\circ)$ is canonical if
\begin{itemize}
\item[\rm{(i)}]~there exists  $e \in G$ with $e \circ x=\{x\}$, for every $x \in G$.
\item[\rm{(ii)}]~for every $x \in G$ there exists a unique $x^{-1} \in G$ with $e \in x \circ x^{-1}$.
\item[\rm{(iii)}]~$x \in y  \circ z $ implies $y \in x \circ z^{-1}$.
\end{itemize}
A nonempty set $R$ with two hyperoperations $"+"$ and $"\circ"$ is called  a
hyperring if $(R,+)$ is a canonical hypergroup , $(R,\circ)$ is a semihypergroup with
$r \circ 0=0 \circ r=0$ for all $r \in R$  and the hyperoperation $"\circ"$ is distributive with respect to $+$, i.e., $x \circ (y+z)=x \circ y+x \circ z$ and  $(x+y) \circ z=x \circ z+y \circ z$ for all $x,y,z \in R$. 

\begin{definition} \cite{f10}
A {\it multiplicative hyperring} is an abelian group $(R,+)$ in which a hyperoperation $\circ $ is defined satisfying the following: 
\begin{itemize}
\item[\rm(i)]~ for all $a, b, c \in R$, we have $a \circ (b \circ c)=(a \circ b) \circ c$;
\item[\rm(ii)]~for all $a, b, c \in R$, we have $a\circ (b+c) \subseteq a\circ b+a\circ c$ and $(b+c)\circ a \subseteq b\circ a+c\circ a$;
\item[\rm(iii)]~for all $a, b \in R$, we have $a\circ (-b) = (-a)\circ b = -(a\circ b)$.
\end{itemize}
\end{definition}
If in (ii) the equality holds then we say that the multiplicative hyperring is strongly distributive.
Recall that $R$ has a zero absorbing property if for all $r \in  R$, $\{0\}=0 \circ r=r \circ 0$.\\
A non empty subset $I$ of a multiplicative hyperring $R$ is a {\it hyperideal} if
\begin{itemize}
\item[\rm(i)]~ If $a, b \in I$, then $a - b \in I$;

\item[\rm(iii)]~ If $x \in I $ and $r \in R$, then $rox \subseteq I$.
\end{itemize}
Let $(\mathbb{Z},+,\cdot)$ be the ring of integers. Corresponding to every subset $A \in P^\star(\mathbb{Z})$ such that $\vert A\vert \geq 2$, there exists a multiplicative hyperring $(\mathbb{Z}_A,+,\circ)$ with $\mathbb{Z}_A=\mathbb{Z}$ and for any $a,b\in \mathbb{Z}_A$, $a \circ b =\{a.r.b\ \vert \ r \in A\}$. 
\begin{definition} \cite{das} 
A proper hyperideal $P$ of $R$ is called a {\it prime hyperideal} if $x\circ y \subseteq P$ for $x,y \in R$ implies that $x \in P$ or $y \in P$. The intersection of all prime hyperideals of $R$ containing $I$ is called the prime radical of $I$, being denoted by $\sqrt{I}$. If the multiplicative hyperring $R$ does not have any prime hyperideal containing $I$, we define $\sqrt{I}=R$. 
\end{definition}
\begin{definition} \cite{ameri}
A proper hyperideal $I$ of $R$ is {\it maximal} in R if for
any hyperideal $J$ of $R$ with $I \subseteq J \subseteq R$ then $J = R$. Also, we say that $R$ is a local multiplicative hyperring, if it has just one maximal hyperideal.
\end{definition}
Let {\bf C} be the class of all finite products of elements of $R$ i.e. ${\bf C} = \{r_1 \circ r_2 \circ . . . \circ r_n \ : \ r_i \in R, n \in \mathbb{N}\} \subseteq P^{\ast }(R)$. A hyperideal $I$ of $R$ is said to be a {\bf C}-hyperideal of $R$ if, for any $A \in {\bf C}, A \cap I \neq \varnothing $ implies $A \subseteq I$.
Let I be a hyperideal of $R$. Then, $D \subseteq \sqrt{I}$ where $D = \{r \in R: r^n \subseteq I \ for \ some \ n \in \mathbb{N}\}$. The equality holds when $I$ is a {\bf C}-hyperideal of $R$(\cite {das}, proposition 3.2). In this paper, we assume that all hyperideals are {\bf C}-hyperideal.
\begin{definition} \cite{das}
A nonzero proper hyperideal $Q$ of $R$ is called a {\it primary hyperideal} if $x\circ y \subseteq Q$ for $x,y \in R$ implies that $x \in Q$ or $y \in \sqrt{Q}$. Since $\sqrt{Q}=P$ is a prime hyperideal of $R$ by Propodition 3.6 in \cite{das}, $Q$ is referred to as a P-primary hyperideal of $R$.
\end{definition}
\begin{definition} 
\cite{ameri} Let $R$ be commutative multiplicative hyperring and $e$ be an identity (i. e., for all $a \in R$, $a \in a\circ e$). An element $x \in R$ in is called {\it unit}, if there exists $y \in R$, such that $e \in x\circ y$.
\end{definition} 
\begin{definition} \cite{ameri}
Let $R$ be a multiplicative hyperring. The element $x \in R$ is
said to be nilpotent if $0 \in x^n$ for some integer $n> 0$.
\end{definition}
\begin{definition} 
A hyperring $R$ is called an {\it integral hyperdomain}, if for all $x, y \in R$,
$0 \in x . y$ implies that $x = 0$ or $y = 0$. 
\end{definition}
\begin{definition} \cite{ameri2}
An element $a \in R$ is said to be zero divizor if there exists $0 \neq b \in R$ such that $0\in a \circ b.$
\end{definition}
\begin{definition} \cite{f10} 
Let $(R_1, +_1, \circ _1)$ and $(R_2, +_2, \circ_2)$ be two multiplicative hyperrings. A mapping $f$ from
$R_1$ into $R_2$ is said to be a {\it good homomorphism} if for all $x,y \in R_1$, $f(x +_1 y) =f(x)+_2 f(y)$ and $f(x\circ_1y) = f(x)\circ_2 f(y)$.\\
Moreover, the kernel  of $f$ is defined by $Ker f=f^{-1}(\langle 0 \rangle)=\{x \in R_1 \ \vert \ f(x) \in \langle 0 \rangle \}.$
\end{definition}
 
\section{ $\alpha$-prime hyperideals  }
\begin{definition}  Let $R$ be a multiplicative hyperring and let $\alpha :R \rightarrow R$ be a fixed good endomorphism. We say that hyperideal $I$ of $R$ is $\alpha$-prime if for
all $x, y \in R$, $x \circ y \subseteq I$  implies $x \in I$ or $\alpha(y) \in I$.
\end{definition}
\begin{example}
Assume that  $(\mathbb{Z},+,\cdot)$ is the ring of integers. Consider the multiplicative hyperring  $(\mathbb{Z},+,\circ)$ in which  $a \circ b =\{2ab,3ab\}$, for all $a,b \in \mathbb{Z}$. Let $\alpha$ is an identity mapping on $(\mathbb{Z},+,\circ)$. Then  $\langle 2 \rangle$ and $\langle 3 \rangle$ are $\alpha$-prime hyperideals of $(\mathbb{Z},+,\circ)$.
\end{example}
\begin{example}
Let $(\mathbb{Z},+,\cdot)$ is the ring of integers. Consider the multiplicative hyperring $(\mathbb{Z},+,\langle 2 \rangle)$. Define the mapping $\alpha: \mathbb{Z} \longrightarrow \mathbb{Z}$ by $\alpha(x)=3x$.  By Theorem 4.2.3 in \cite{f10}, $\alpha$ is a homomorphism of the multiplicative hyperring $(\mathbb{Z},+,\langle 2 \rangle)$. In the multiplicative hyperring, the hyperideal $\langle 2 \rangle$ is  $\alpha$-prime.
\end{example}
\begin{theorem}
If $I$ is an $\alpha$-prime hyperideal of $R$, then $\alpha(I) \subseteq I$.
\end{theorem}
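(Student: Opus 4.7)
The plan is to exploit the hyperideal absorption property to produce, for any given $y \in I$, a containment $x \circ y \subseteq I$ to which $\alpha$-primeness can be applied, and then to use the fact that an $\alpha$-prime hyperideal is proper in order to force the conclusion onto $\alpha(y)$ rather than $x$.

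In detail, I would start by fixing an arbitrary element $y \in I$; the goal is to show that $\alpha(y) \in I$, since this gives $\alpha(I) \subseteq I$. For every $x \in R$, the hyperideal property (clause (iii) in the definition of hyperideal: $y \in I$ and $x \in R$ imply $x \circ y \subseteq I$) yields $x \circ y \subseteq I$. Now invoke the hypothesis that $I$ is $\alpha$-prime: this forces, for each such $x$, the dichotomy $x \in I$ or $\alpha(y) \in I$.

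To conclude, I would use that the hyperideal $I$ is proper, which is built into the definition of $\alpha$-prime hyperideal (echoing the abstract's phrasing "a proper hyperideal $I$ of $R$ is called $\alpha$-prime"). Therefore there exists some $x_0 \in R \setminus I$. Applying the dichotomy above to this particular $x_0$, the first alternative fails, so $\alpha(y) \in I$. Since $y \in I$ was arbitrary, $\alpha(I) \subseteq I$.

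There is no real obstacle here; the only subtle point is making explicit that $I$ is proper, so that one can actually choose an $x \notin I$ to rule out the first branch of the $\alpha$-prime condition. All other steps are immediate from the definitions given in the preliminaries.
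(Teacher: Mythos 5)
Your proof is correct, and it uses the same underlying mechanism as the paper: produce a product $x\circ y\subseteq I$ with the left factor known to lie outside $I$, then let the $\alpha$-prime dichotomy force $\alpha(y)\in I$. The only difference is the choice of witness outside $I$: the paper takes the identity element, writing $x\in 1\circ x\subseteq I$ and using $1\notin I$ (and it invokes the {\bf C}-hyperideal convention along the way, which is not really needed there), whereas you take an arbitrary $x_0\in R\setminus I$ supplied by properness of $I$. Your version is marginally more general and cleaner: it does not assume that $R$ has an identity element, nor that $I$ is a {\bf C}-hyperideal, only that an $\alpha$-prime hyperideal is proper, which is exactly the point you rightly flag as the one thing that must be made explicit.
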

Let $x \in I$. Since $I$ is a ${\bf C}$-hyperideal of $R$ then  $x \in 1 \circ x \subseteq I$. Since $I$ is an $\alpha$-prime hyperideal of $R$ and  $1 \notin I$ then $\alpha(x) \in I$ for all $x \in I$. Thus we have $\alpha(I) \subseteq I$.
\begin{lem}
If $I$ is an $\alpha$-prime hyperideal of $R$, then $\sqrt{I}$ is an $\alpha$-prime hyperideal of $R$.
\end{lem}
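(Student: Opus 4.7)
The plan is to reduce the radical containment $x \circ y \subseteq \sqrt{I}$ to a concrete containment $x^n \circ y^n \subseteq I$, and then peel off the $y$ factors one at a time using $\alpha$-primeness of $I$. So take $x,y \in R$ with $x \circ y \subseteq \sqrt{I}$ and pick any $w \in x \circ y$ (nonempty since the hyperoperation takes values in nonempty subsets). Then $w \in \sqrt{I}$ and, because $I$ is a $\mathbf{C}$-hyperideal, Proposition 3.2 of \cite{das} furnishes an $n \in \mathbb{N}$ with $w^n \subseteq I$. Commutativity and associativity give $w^n \subseteq (x \circ y)^n = x^n \circ y^n$, so the nonempty set $w^n$ lies in $(x^n \circ y^n) \cap I$. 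Since $x^n \circ y^n = \underbrace{x \circ \cdots \circ x}_{n} \circ \underbrace{y \circ \cdots \circ y}_{n}$ belongs to the class $\mathbf{C}$, the $\mathbf{C}$-hyperideal property of $I$ upgrades this to $x^n \circ y^n \subseteq I$.

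Next I would strip the $y$'s one by one. Writing $x^n \circ y^n = (x^n \circ y^{n-1}) \circ y$, for every $s \in x^n \circ y^{n-1}$ we have $s \circ y \subseteq I$, so $\alpha$-primeness gives $s \in I$ or $\alpha(y) \in I$. Either $\alpha(y) \in I \subseteq \sqrt{I}$ (and we are done) or $x^n \circ y^{n-1} \subseteq I$; iterating this reduction $n$ times either uncovers $\alpha(y) \in I$ at some stage, or culminates in $x^n \subseteq I$, i.e.\ $x \in \sqrt{I}$. In either outcome, the $\alpha$-prime condition for $\sqrt{I}$ holds.

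The main obstacle is the jump from pointwise information ``each $z \in x \circ y$ has $z^{n_z} \subseteq I$'' to a uniform power: the exponents $n_z$ may vary with $z$, and $x \circ y$ can be infinite, so simply taking a maximum of the $n_z$ is not an option. The standing assumption that every hyperideal is a $\mathbf{C}$-hyperideal is exactly what sidesteps this: a single witness $w$ inside $x \circ y$ is enough, because once $w^n \subseteq I$ intersects the $\mathbf{C}$-class $x^n \circ y^n$ nontrivially, $\mathbf{C}$-closedness of $I$ pulls the entire set into $I$.
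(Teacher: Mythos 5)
Your proposal is correct, and it follows the same overall strategy as the paper's proof (reduce to a containment $x^n\circ y^n\subseteq I$ and then invoke $\alpha$-primeness of $I$), but it is noticeably more careful at exactly the two places where the paper's argument is glossed. First, the paper passes directly from $a\circ b\subseteq\sqrt{I}$ to ``$(a\circ b)^n=a^n\circ b^n\subseteq I$'' without comment; as you point out, the elements of $a\circ b$ may a priori require different exponents, and your single-witness argument ($w\in x\circ y$, $w^n\subseteq I$ by Proposition 3.2 of Dasgupta, then $w^n\subseteq x^n\circ y^n\in\mathbf{C}$ and the $\mathbf{C}$-hyperideal property pulls all of $x^n\circ y^n$ into $I$) is the right repair. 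Second, the paper concludes ``$a^n\subseteq I$ or $\alpha(b^n)\subseteq I$'' by applying $\alpha$-primeness to the set containment $a^n\circ b^n\subseteq I$, which is not literally the elementwise definition; your peeling argument, stripping one factor $y$ at a time and using the dichotomy $s\in I$ or $\alpha(y)\in I$ for each $s$ in the remaining product, legitimately derives $x^n\subseteq I$ (hence $x\in\sqrt{I}$) or $\alpha(y)\in I\subseteq\sqrt{I}$. (Alternatively, this second step could be delegated to the paper's later theorem that $I_1\circ I_2\subseteq I$ forces $I_1\subseteq I$ or $\alpha(I_2)\subseteq I$, applied to $\langle x\rangle^n$ and $\langle y\rangle^n$, but your elementwise route is self-contained and uses nothing beyond the definition.) The only point left implicit in both your argument and the paper's is the routine fact that $\sqrt{I}$ is itself a (proper) hyperideal, which is taken for granted throughout.
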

\begin{proof}
Assume that $a \circ b \subseteq \sqrt{I}$ for $a,b \in R$. This means $(a \circ b)^n=a^n \circ b^n \subseteq I$. Since $I$ is an $\alpha$-prime hyperideal of $R$, then we get $a^n \subseteq I$ or $\alpha(b^n)=(\alpha(b))^n \subseteq I$. This implies that $a \in \sqrt{I}$ or $\alpha(b) \subseteq \sqrt{I}$. Thus $\sqrt {I}$ is an $\alpha$-prime hyperideal of $R$. 
\end{proof}
\begin{theorem}
Let $I$ is an $\alpha$-prime hyperideal of $R$. Then $E=\{s \in R \ \vert \ \alpha(s) \in I \} $ is an $\alpha$-prime hyperideal of $R$ containing $I$. 
\end{theorem}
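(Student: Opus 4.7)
The plan is to verify in turn that (i) $I\subseteq E$, (ii) $E$ is a hyperideal, (iii) $E$ is proper, and (iv) $E$ satisfies the $\alpha$-prime condition.

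For (i), the previous theorem already gives $\alpha(I)\subseteq I$, so if $s\in I$ then $\alpha(s)\in I$ and hence $s\in E$. For (ii), I would use the fact that $\alpha$ is a good endomorphism, so it commutes with both the hypersum and the hyperproduct. If $a,b\in E$, then $\alpha(a),\alpha(b)\in I$, so $\alpha(a-b)=\alpha(a)-\alpha(b)\in I$ (using that $I$ is a hyperideal of the canonical hypergroup structure), giving $a-b\in E$. For the absorption property, given $x\in E$ and $r\in R$, the identity $\alpha(r\circ x)=\alpha(r)\circ\alpha(x)$ together with $\alpha(x)\in I$ and the absorption property of $I$ yields $\alpha(r\circ x)\subseteq I$, hence $r\circ x\subseteq E$.

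For (iv) — the main point — suppose $x\circ y\subseteq E$. By definition of $E$, applying $\alpha$ element-wise gives $\alpha(x\circ y)\subseteq I$, and since $\alpha$ is a good homomorphism, $\alpha(x)\circ\alpha(y)=\alpha(x\circ y)\subseteq I$. Now I invoke the $\alpha$-primeness of $I$ with the pair $(\alpha(x),\alpha(y))$: either $\alpha(x)\in I$, in which case $x\in E$ by definition; or $\alpha(\alpha(y))\in I$, which says precisely that $\alpha(y)\in E$. Either conclusion is what we need.

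The delicate step is (iii), showing $E\neq R$. This requires knowing that the identity $1$ of $R$ is not sent into $I$ by $\alpha$. Since the preceding theorem's proof already treated $1$ as a (scalar) identity with $x\in 1\circ x$ and used $1\notin I$, and since $\alpha$ is taken to be a good endomorphism (so $\alpha(1)=1$ in the standard convention under which such identities are preserved), we obtain $\alpha(1)=1\notin I$ and hence $1\notin E$. This is the step most sensitive to the paper's implicit conventions, so in writing the proof I would make the appeal to $\alpha(1)=1$ explicit and, if the author has not stated this assumption globally, note it here. All the other steps are routine propagation of properties of $I$ through the homomorphism $\alpha$.
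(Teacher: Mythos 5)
Your proof is correct and its core step is exactly the paper's argument: from $x\circ y\subseteq E$ deduce $\alpha(x)\circ\alpha(y)=\alpha(x\circ y)\subseteq I$ and apply $\alpha$-primeness of $I$ to the pair $(\alpha(x),\alpha(y))$, giving $x\in E$ or $\alpha(y)\in E$. You additionally spell out the hyperideal and containment checks that the paper dismisses as clear, and your properness discussion (which hinges on the unstated convention $\alpha(1)=1$) goes beyond the paper, whose own proof never addresses $E\neq R$ at all; flagging that assumption explicitly, as you do, is the right call rather than a defect.
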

\begin{proof}
It is clear that $E$ is a hyperideal of $R$ containing $I$. Let $a \circ b \subseteq E$ for some $a,b \in R$. This means $\alpha(a \circ b)=\alpha(a) \circ \alpha(b) \subseteq I$. Since $I$ is an $\alpha$-prime hyperideal of $R$ then $\alpha(a)\in I$ or $\alpha( \alpha(b)) \in I$. This implies that $a \in E$ or $\alpha(b) \in E$. Thus $E=\{s \in R \ \vert \ \alpha(s) \in I \} $ is an $\alpha$-prime hyperideal of $R$. 
\end{proof}
\begin{lem}
Let $I$ is an $\alpha$-prime hyperideal of $R$ and let $I$ be maximal with respect to the fact that $s \in I$ implies $\alpha(s) \in I$. Then $I$ is prime.
\end{lem}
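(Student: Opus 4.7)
The plan is to unpack the prime condition directly. Assume $a\circ b\subseteq I$ with $a,b\in R$ and $a\notin I$; I must show $b\in I$. Since $I$ is $\alpha$-prime, the hypothesis $a\notin I$ immediately yields $\alpha(b)\in I$, so the whole task reduces to upgrading the conclusion from $\alpha(b)\in I$ to $b\in I$, and this is where the maximality assumption on $I$ should enter.

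To invoke that maximality, I would introduce the auxiliary set
\[
E=\{\,s\in R\ \vert\ \alpha(s)\in I\,\},
\]
which is exactly the object already analyzed in the Theorem preceding this Lemma. By that theorem, $E$ is an $\alpha$-prime hyperideal of $R$ containing $I$; in particular $E$ is a proper hyperideal of $R$. Note that $b\in I$ will follow as soon as I show $E=I$, since $\alpha(b)\in I$ means $b\in E$ by definition of $E$.

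Next I would verify that $E$ itself satisfies the invariance property "$s\in E\Rightarrow \alpha(s)\in E$" that defines the class over which $I$ is maximal. Indeed, if $s\in E$ then $\alpha(s)\in I$; applying the property assumed for $I$ to the element $\alpha(s)\in I$ produces $\alpha(\alpha(s))\in I$, which by the definition of $E$ says $\alpha(s)\in E$. Thus $E$ is a proper hyperideal containing $I$ and lying in the same class with respect to which $I$ was assumed maximal, so $E=I$.

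The main obstacle I foresee is not a computational one but rather the careful bookkeeping around the maximality hypothesis: one has to be precise that $E$ is \emph{proper} (so that maximality applies and cannot be vacuously trumped by $R$ itself), and that $E$ genuinely lies in the class of hyperideals stable under the implication $s\in\cdot\Rightarrow \alpha(s)\in\cdot$. Both points are handled by the preceding theorem together with the fact (established earlier in this section) that every $\alpha$-prime hyperideal $I$ satisfies $\alpha(I)\subseteq I$. Once these are in place the argument closes cleanly: $E=I$ forces $b\in I$, proving $I$ is prime.
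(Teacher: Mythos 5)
Your proof is correct, but it follows a genuinely different route from the paper's. The paper argues by contradiction: it picks $x\circ y\subseteq I$ with $x\notin I$, $y\notin I$, and works inside the enlarged hyperideal $I+\langle x\rangle$, showing via $\alpha$-primeness (applied to $a\circ y\subseteq I$ with $y\notin I$) that every $a\in I+\langle x\rangle$ satisfies $\alpha(a)\in I\subseteq I+\langle x\rangle$; maximality then forces $I+\langle x\rangle=I$, so $x\in I$, a contradiction. You instead apply the maximality to the preimage hyperideal $E=\{s\in R\ \vert\ \alpha(s)\in I\}$, which by the preceding theorem is an $\alpha$-prime (hence proper) hyperideal containing $I$, and which satisfies the invariance condition trivially since $\alpha(E)\subseteq I\subseteq E$ (your detour through $\alpha(\alpha(s))\in I$ is not even needed); maximality then collapses $E$ to $I$, i.e.\ $\alpha^{-1}(I)=I$, which upgrades the conclusion $\alpha(b)\in I$ of $\alpha$-primeness directly to $b\in I$. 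What each approach buys: yours is shorter, reuses the already-proved theorem about $E$, sidesteps the paper's element-wise computation in $I+\langle x\rangle$ (where the paper tacitly writes every element of $\langle x\rangle$ as lying in some $r\circ x$), and in fact proves the stronger statement that $\alpha^{-1}(I)=I$ under the hypotheses; the paper's argument is self-contained and does not rely on the properness of $\alpha^{-1}(I)$, though it is no more careful than you are on the one delicate shared point, namely that the enlarged hyperideal fed into the maximality hypothesis must be proper — you at least flag and discharge this via the preceding theorem, while the paper leaves it implicit for $I+\langle x\rangle$.
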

\begin{proof}
Assume that $I$ is not prime hyperideal of $R$. Then we get  $x,y \in R$ with $x \circ y \subseteq I$ such that neither $x \in I$ nor $y \in I$.  Let $a=p+t \in  I+\langle x \rangle$ for $p \in I$ and $t \in \langle x \rangle$. Then there exists $r \in R$ with  $t \in r \circ x $ such that $a \in p+r \circ x$. Therefore we have $a \circ y \subseteq (p+r \circ x) \circ y \subseteq p \circ y +r \circ x \circ y \subseteq I$. Since $I$ is an $\alpha$-prime hyperideal of $R$ and $y \notin I$ then $\alpha(a) \in I \subseteq I+\langle x \rangle$. By hypothesis, $I$ is maximal. Hence $I=I+\langle x \rangle$  which implies $x \in I$. This a contradiction. Thus $I$ is a prime hyperideal of $R$.  
\end{proof}
\begin{theorem}
The hyperideal $I$ of $R$ is $\alpha$-prime  if and only if for any hyperideals $I_1, I_2$ of $R$, $\hspace{0.25cm} I_1 \circ I_2\subseteq I  \Longrightarrow I_1 \subseteq I \hspace{0.15cm}\text {or} \hspace{0.15cm}   \alpha(I_2) \subseteq I$.
\end{theorem}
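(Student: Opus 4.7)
The plan is to handle the two directions separately. The forward implication is essentially a direct application of the definition, while the reverse requires a mild technical step of passing from elements to principal hyperideals.

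For $(\Rightarrow)$, I would assume $I$ is $\alpha$-prime and take hyperideals $I_1, I_2$ with $I_1 \circ I_2 \subseteq I$ and $I_1 \not\subseteq I$. Picking a witness $x \in I_1 \setminus I$, for every $y \in I_2$ we have $x \circ y \subseteq I_1 \circ I_2 \subseteq I$; then $\alpha$-primeness together with $x \notin I$ forces $\alpha(y) \in I$. This yields $\alpha(I_2) \subseteq I$, which is exactly what we need.

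For $(\Leftarrow)$, suppose the hyperideal-level condition holds and take $x, y \in R$ with $x \circ y \subseteq I$. The natural move is to pass to the principal hyperideals $\langle x \rangle$ and $\langle y \rangle$ and verify the intermediate claim $\langle x \rangle \circ \langle y \rangle \subseteq I$. Once this is in hand, the hypothesis gives $\langle x \rangle \subseteq I$ or $\alpha(\langle y \rangle) \subseteq I$, which translates directly into $x \in I$ or $\alpha(y) \in I$.

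The main obstacle is the intermediate claim. A generic element of $\langle x \rangle$ is a finite sum of an integer multiple $nx$ and elements drawn from sets of the form $r \circ x$ with $r \in R$, and similarly for $\langle y \rangle$. Expanding a product of two such elements by the (sub)distributivity of $\circ$ over $+$ produces cross terms of the shape $(r \circ x) \circ (s \circ y) = r \circ s \circ (x \circ y)$, together with terms of the shape $n(x \circ y)$ and mixed terms $n\,x \circ (s \circ y)$, $(r \circ x) \circ m\,y$. Each of these lies in $I$: the first because $x \circ y \subseteq I$ and $I$ absorbs products with elements of $R$, and the others because $I$ is a subgroup of $(R,+)$ that also absorbs under $\circ$. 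Since $I$ is closed under the group operation, the whole sum lands in $I$, giving $\langle x \rangle \circ \langle y \rangle \subseteq I$. The only care required is that the hyperoperation is merely sub-distributive (inclusion, not equality), but the one-sided inclusion is exactly the direction we need.
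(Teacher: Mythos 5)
Your proof is correct and follows essentially the same route as the paper: the same witness argument ($x \in I_1 \setminus I$ forcing $\alpha(y) \in I$ for all $y \in I_2$) for the forward direction, and the same passage to principal hyperideals for the converse. The only difference is that the paper simply cites $\langle x \rangle \circ \langle y \rangle \subseteq \langle x \circ y \rangle \subseteq I$, whereas you verify $\langle x \rangle \circ \langle y \rangle \subseteq I$ directly by expanding generic elements and using subdistributivity and absorption, which merely fills in a detail the paper leaves implicit.
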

\begin{proof}
Let $I$ is an $\alpha$-prime hyperideal of $R$. Assume that $I_1 \circ I_2 \subseteq I$ for some hyperideals $I_1, I_2$ of $R$ such that $I_1 \nsubseteq I$. Then there exists $x \in I_1$ but $x \notin I$. Clearly, $x \circ y \subseteq I_1 \circ I_2 \subseteq I$ for all $y \in I_2$. Since the hyperideal $I$  is $\alpha$-prime and $x \notin I$ then $\alpha(y) \in I$ for all $y \in I_2$ which means $\alpha(I_2) \subseteq I$. Conversely, suppose that $x \circ y \subseteq I$ for some $x,y \in R$. Hence $\langle x \rangle \circ \langle y \rangle \subseteq \langle x \circ y \rangle \subseteq I$. By hypothesis, we get $\langle x \rangle \subseteq I$ or $\alpha(\langle y \rangle) \subseteq I$ which implies $x \in I$ or $\alpha(y) \in I$. Thus $I$ is an $\alpha$-prime hyperideal of $R$.
\end{proof}
\begin{theorem}
Let $I$ is an $\alpha$-prime hyperideal of $R$ and $S$ is a subset of $R$. Then $(I:S)$ is an $\alpha$-prime hyperideal of $R$.
\end{theorem}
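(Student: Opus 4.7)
The plan is to unpack $(I:S) = \{r \in R : r \circ S \subseteq I\}$ and show that the $\alpha$-prime property of $I$ is inherited by this colon hyperideal. As a preliminary I would verify that $(I:S)$ is a hyperideal of $R$: closure under subtraction follows from distributivity and the hyperideal property of $I$, while absorption uses $(r \circ a) \circ S = r \circ (a \circ S) \subseteq r \circ I \subseteq I$ for $a \in (I:S)$ and $r \in R$. One should also note that $(I:S)$ is proper only when $S \nsubseteq I$ (otherwise $(I:S) = R$), so I would either add this mild hypothesis or interpret ``$\alpha$-prime'' as applying to the nontrivial case.

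For the $\alpha$-prime property, fix $x, y \in R$ with $x \circ y \subseteq (I:S)$ and suppose $x \notin (I:S)$; the goal is to deduce $\alpha(y) \in (I:S)$. The assumption $x \notin (I:S)$ yields some $s_0 \in S$ with $x \circ s_0 \nsubseteq I$, so we may extract a concrete witness $b \in x \circ s_0$ with $b \notin I$. Meanwhile $x \circ y \subseteq (I:S)$ gives $(x \circ y) \circ s_0 \subseteq I$, and by commutativity and associativity this set equals $(x \circ s_0) \circ y$; in particular $b \circ y \subseteq (x \circ s_0) \circ y \subseteq I$. Now invoking the $\alpha$-prime property of $I$ on $b \circ y \subseteq I$, and using $b \notin I$, forces $\alpha(y) \in I$. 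Since $I$ is a hyperideal, $\alpha(y) \circ S \subseteq I$, hence $\alpha(y) \in (I:S)$, which completes the proof.

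The one real subtlety to navigate is the set-valuedness of the hyperproduct: one cannot argue directly that ``$x \circ s_0 \notin I$'' but must extract a specific element $b$ of $x \circ s_0$ that lies outside $I$, and then propagate this choice through the associativity/commutativity rewriting to land on the honest containment $b \circ y \subseteq I$, which is the input the $\alpha$-prime hypothesis requires. Beyond that, every step is a routine manipulation of subsets of $R$, using only that $I$ is an $\alpha$-prime hyperideal and that $R$ is commutative.
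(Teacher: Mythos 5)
Your proof is correct, and it takes a noticeably different (and in fact more careful) route than the paper's. The paper's proof notes that $x\circ y\subseteq (I:S)$ puts $y$ in $(I:x\circ S)$, asserts as ``easy to see'' the identity $(I:x\circ S)=(I:S)\cup(I:x)$, and then applies $\alpha$-primeness to $y\circ x\subseteq I$, ending with ``$y\in(I:S)$ or $\alpha(x)\in(I:S)$'' (which matches the definition only after invoking commutativity to swap the roles of $x$ and $y$). Only the inclusion $(I:S)\cup(I:x)\subseteq(I:x\circ S)$ is genuinely easy; the reverse inclusion, which is the one the paper actually uses, is not obvious when $I$ is merely $\alpha$-prime rather than prime. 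Your argument bypasses that identity altogether: from $x\notin(I:S)$ you extract a witness $s_0\in S$ and an element $b\in x\circ s_0$ with $b\notin I$, use associativity and commutativity to get the honest containment $b\circ y\subseteq (x\circ y)\circ s_0\subseteq I$, and then apply $\alpha$-primeness of $I$ directly to conclude $\alpha(y)\in I\subseteq(I:S)$. This yields exactly the asymmetric implication ``$x\in(I:S)$ or $\alpha(y)\in(I:S)$'' demanded by the definition, and the element-level witness extraction is precisely the point where the set-valued hyperoperation must be handled, which you do correctly. You also verify that $(I:S)$ is a hyperideal and flag the properness issue ($(I:S)=R$ when $S\subseteq I$), both of which the paper silently omits; the only cost of your approach is a slightly longer write-up, in exchange for which you avoid the paper's unjustified colon-ideal identity.
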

\begin{proof}
Let $x \circ y \subseteq (I:S)$ for some $x,y \in R$. It is easy to see $(I:x\circ S)=(I:S) \cup (I:x)$. Since $y \in (I:x \circ S)$ then we have $y \in (I:S)$ or $y \circ x \subseteq I$. Hence we get $y \in (I:S)$ or $y \in I$ or $\alpha(x) \in I$, since $I$ is an $\alpha$-prime hyperideal of $R$. This implies that $y \in (I:S)$ or $\alpha(x) \in (I:S)$. Thus $(I:S)$ is an $\alpha$-prime hyperideal of $R$.
\end{proof}
\begin{lem} \label{11}
Let $I$ is an $\alpha$-prime hyperideal of $R$. If $x^n \subseteq I$ for some $x \in R$, then $\alpha(x) \in I$.
\end{lem}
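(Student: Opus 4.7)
The plan is to argue by induction on $n$, using the factorization $x^n = x^{n-1}\circ x$ together with the earlier theorem that $\alpha(I)\subseteq I$ whenever $I$ is $\alpha$-prime.

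For the base case $n=1$, the hypothesis $x^1\subseteq I$ just says $x\in I$, and then the previously established inclusion $\alpha(I)\subseteq I$ immediately gives $\alpha(x)\in I$.

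For the inductive step, assume the statement holds for $n-1$ and suppose $x^n\subseteq I$. By associativity of the hyperoperation, I would write
\[
x^n \;=\; x^{n-1}\circ x \;=\;\bigcup_{z\in x^{n-1}} z\circ x \;\subseteq\; I.
\]
Thus for every $z\in x^{n-1}$ we have $z\circ x\subseteq I$. Applying the definition of $\alpha$-prime to this inclusion (with $z$ in the first slot and $x$ in the second), for each such $z$ either $\alpha(x)\in I$ — in which case we are done — or else $z\in I$. If the $\alpha(x)\in I$ alternative never occurs, then every $z\in x^{n-1}$ lies in $I$, so $x^{n-1}\subseteq I$, and the inductive hypothesis finishes the proof by yielding $\alpha(x)\in I$.

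The one subtle point, and really the whole reason the argument works, is the choice of grouping: writing $x^n = x\circ x^{n-1}$ would instead yield (after ruling out $x\in I$) the inclusion $\alpha(x^{n-1})=(\alpha(x))^{n-1}\subseteq I$, from which induction would only deliver $\alpha^2(x)\in I$, not $\alpha(x)\in I$. Factoring as $x^{n-1}\circ x$ instead places $x$ in the second slot of the defining implication, so the $\alpha$ gets applied exactly to the element we want to control. No further obstacles are expected — associativity of $\circ$ and the base-case use of $\alpha(I)\subseteq I$ are all that is needed beyond the definition itself.
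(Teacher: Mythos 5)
Your proof is correct and follows essentially the same route as the paper: factor $x^n=x^{n-1}\circ x$, apply $\alpha$-primeness to each $z\circ x\subseteq I$ to descend from $x^n\subseteq I$ to $x^{n-1}\subseteq I$ (unless $\alpha(x)\in I$ already), and settle the case $x\in I$ via the earlier fact $\alpha(I)\subseteq I$. The only cosmetic difference is that you quantify over all $z\in x^{n-1}$, while the paper fixes one $t\in x^{n-1}$ and uses the \textbf{C}-hyperideal property to upgrade $t\in I$ to $x^{n-1}\subseteq I$; both versions are sound.
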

\begin{proof}
Let $x^n \subseteq I$ for some $x \in R$. Assume that $t \in x^{n-1}$. Then $t \circ x \subseteq I$. Since $I$ is an $\alpha$-prime hyperideal of $R$ then we have $t \in I$ or $\alpha(x) \in I$. Let $t \in I$. Since $t \in x^{n-1}$ and $I$ is a {\bf C}-hyperideal of $R$ then we get $x^{n-1} \subseteq I$. Continuing this process, we obtain $x \in I$ or $\alpha(x) \in I$. If $x \in I$, then $x \in 1 \circ x \subseteq I$, that is, $\alpha(x) \in I$. 
\end{proof}
\begin{theorem}
Let $I$ is an $\alpha$-prime hyperideal of $R$. If $(\alpha(y))^n \subseteq I$ for some $y \in R$, then $\alpha^2(y) \in I$.
\end{theorem}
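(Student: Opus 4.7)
The plan is to reduce this statement to a direct application of Lemma \ref{11}. Lemma \ref{11} says that if $I$ is $\alpha$-prime and $x^n \subseteq I$ for some $x \in R$, then $\alpha(x) \in I$. Since the hypothesis here gives $(\alpha(y))^n \subseteq I$, I would simply take $x := \alpha(y)$ and feed it into Lemma \ref{11} to conclude $\alpha(\alpha(y)) = \alpha^2(y) \in I$.

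The only point that requires a brief check is that this substitution is legitimate: $\alpha$ is a fixed good endomorphism of $R$, hence $\alpha(y)$ is an element of $R$, so it is a valid choice for the variable $x$ in Lemma \ref{11}. One should also note the trivial identity $\alpha(\alpha(y)) = \alpha^2(y)$ that appears in the conclusion, which is just notation.

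I do not foresee any real obstacle here; the proof is essentially a one-line invocation of Lemma \ref{11} together with the definition $\alpha^2 := \alpha \circ \alpha$. If anything, the only subtlety is to recognize that no stronger structural argument is needed — the lemma was stated in sufficient generality that the case $x = \alpha(y)$ is already covered.
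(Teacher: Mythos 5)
Your proposal is correct and coincides with the paper's own proof: the author likewise sets $x=\alpha(y)$ and invokes Lemma \ref{11} to obtain $\alpha^2(y)\in I$. Nothing further is needed.
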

\begin{proof}
Let $I$ is an $\alpha$-prime hyperideal of $R$ such that $(\alpha(y))^n \subseteq I$ for some $y \in R$. Assume that $x=\alpha(y)$. Now the claim follows by Lemma \ref{11}.
\end{proof}
\begin{definition}
Let $R$ be a multiplicative hyperring. An element $x$ of $R$ is said to be $\alpha$-nilpotent, if $0 \in \alpha(x^n)$, for some integer $n >0$. We denote the set of $\alpha$-nilpotent elements of $R$ by $Nil_{\alpha}(R)$ and call it the $\alpha$-nilradical of $R$.
\end{definition}
\begin{theorem}
The set $Nil_{\alpha}(R)$ of all $\alpha$-nilpotent elements of $R$ with scalar identity 1, is a hyperideal. 
\end{theorem}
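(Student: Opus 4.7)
The plan is to verify that $N := Nil_\alpha(R)$ satisfies the two defining conditions of a hyperideal: closure under the group subtraction $-$, and the absorption property $r \circ x \subseteq N$ for all $r \in R$ and $x \in N$. First I would note that $0 \in N$ (using $n = 1$ and $\alpha(0) = 0$, which holds since $\alpha$ is a good endomorphism of the abelian group $(R,+)$), so the set is nonempty.

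For the subtraction step, let $a, b \in N$ be witnessed by $0 \in \alpha(a^n)$ and $0 \in \alpha(b^m)$. I would set $k = n + m - 1$ and use that $\alpha$ is a good homomorphism to identify $\alpha((a-b)^k) = (\alpha(a) - \alpha(b))^k$. Iterated application of distributivity together with commutativity of $\circ$ then realises this set inside a sum whose summands are hyperproducts of the form $(\alpha(a))^i \circ (-\alpha(b))^{k-i}$ for $0 \le i \le k$. By pigeonhole every such summand satisfies $i \ge n$ or $k - i \ge m$, so contains either $\alpha(a^n) = (\alpha(a))^n$ or $\alpha(b^m) = (\alpha(b))^m$ as a $\circ$-factor. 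Since each of those factors contains $0$, zero absorption ($s \circ 0 = \{0\}$) forces $0$ into every summand, and hence into the group-sum. Propagating this witness $0$ through the elementary distribution steps yields $0 \in \alpha((a-b)^k)$, so $a - b \in N$.

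For the absorption step, take $x \in N$ with $0 \in \alpha(x^n)$ and any $r \in R$, pick an arbitrary $y \in r \circ x$, and observe using commutativity that $y^n \subseteq (r \circ x)^n = r^n \circ x^n$. Applying $\alpha$ gives $\alpha(y^n) \subseteq \alpha(r^n) \circ \alpha(x^n)$; the presence of $0$ in $\alpha(x^n)$ together with zero absorption places $0$ in this hyperproduct, and tracking the witness back gives $0 \in \alpha(y^n)$, so $y \in N$ and hence $r \circ x \subseteq N$. The principal obstacle I anticipate is structural: distributivity in a multiplicative hyperring is only an inclusion $a \circ (b+c) \subseteq a \circ b + a \circ c$, so the binomial expansion is a superset of $(a-b)^k$ rather than equal to it, and a naive reading would only place $0$ in the ambient expansion, not in $(\alpha(a) - \alpha(b))^k$ itself. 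I would handle this by distributing one factor at a time and invoking the identity $s \circ 0 = \{0\}$ at each elementary step, so that the witness $0$ persists as an actual element of the hyperproduct at every stage rather than appearing only after full expansion.
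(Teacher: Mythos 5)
Your overall plan (absorption via $(r\circ x)^n=r^n\circ x^n$, subtraction via a binomial pigeonhole with exponent $n+m-1$) is the same as the paper's, and you correctly identified the real difficulty that the paper glosses over: distributivity in a multiplicative hyperring is only an inclusion. The problem is that your proposed remedy does not close this gap. In both steps you end with the set you care about contained in a larger set that contains $0$, and then ``track the witness back'': from $\alpha(y^n)\subseteq \alpha(r^n)\circ\alpha(x^n)\ni 0$ you conclude $0\in\alpha(y^n)$, and from $(\alpha(a)-\alpha(b))^k\subseteq\sum_i\pm\,\alpha(a)^i\circ\alpha(b)^{k-i}\ni 0$ you conclude $0\in(\alpha(a)-\alpha(b))^k$. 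Membership does not pass from a superset to a subset, and distributing one factor at a time does not change the direction of the inclusion: at every elementary step $A\circ(u-v)\subseteq A\circ u-A\circ v$ still puts the hyperproduct on the small side. The identity $s\circ 0=\{0\}$ cannot act on the product itself, because none of the $\circ$-factors of $(\alpha(a)-\alpha(b))^k$ (or of $\alpha(y)^n$) is the element $0$; it only certifies $0$ as an element of the expanded terms $\alpha(a)^i\circ\alpha(b)^{k-i}$, i.e.\ of the large side, which is exactly the ``naive reading'' you set out to avoid.

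What actually closes the gap in this paper's framework is the standing convention that all hyperideals are ${\bf C}$-hyperideals, combined with the zero absorbing property, under which $\langle 0\rangle=\{0\}$: any finite hyperproduct $T\in{\bf C}$ with $0\in T$ then satisfies $T\subseteq\langle 0\rangle=\{0\}$. Applying this to each term $\alpha(a)^i\circ\alpha(b)^{k-i}$ (each contains $0$ by your pigeonhole argument) collapses the whole binomial expansion to $\{0\}$, so $(\alpha(a)-\alpha(b))^k\subseteq\{0\}$ and, being nonempty, equals $\{0\}$; the same device collapses $\alpha(r)^n\circ\alpha(x)^n$ and with it $\alpha(y)^n$ in the absorption step. (Alternatively, one can restrict to strongly distributive hyperrings, where the binomial expansion is an equality.) As written, however, both of your witness-transfer steps are invalid inferences, so the proof has a genuine gap --- the same one, incidentally, that the paper's own terse proof leaves unaddressed.
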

\begin{proof}
Suppose that $x \in Nil_{\alpha}(R)$. Then $0 \in \alpha(x^n)$ for some integer $n>0$. Hence for all $r \in R$, we get $0 \in \alpha(r^n) \circ \alpha(x^n)=\alpha((r\circ x)^n)$ which implies $r \circ x \in  Nil_{\alpha}(R)$. Now, Suppose that $x,y \in Nil_{\alpha}(R)$, then there exist $n,m \in \mathbb{N}$ such that $0 \in \alpha(x^n)$ and $0 \in \alpha(y^n)$. Thus we have $0 \in \alpha ((x-y)^{n+m})$. Therefore $x-y \in Nil_{\alpha}(R)$. Consequently, $Nil_{\alpha}(R)$ is a hyperideal.
\end{proof}
\begin{theorem} \label{12}
Let  $R_1$ and $R_2$ be two multiplicative hyperrings and  $f:R_1 \rightarrow R_2$  a good homomorphism. If $\alpha \in End(R_1) \cap End(R_2)$ such that $\alpha(f(r))=f(\alpha(r))$ for every $r \in R_1$, then $f^{-1}(I_2)$ is an $\alpha$-prime hyperideal of $R_1$ for some $\alpha$-prime hyperideal $I_2$ of $R_2$.
\end{theorem}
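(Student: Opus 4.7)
The plan is to set $I_1 := f^{-1}(I_2)$ and verify the two parts of the definition of an $\alpha$-prime hyperideal for $I_1$, namely that $I_1$ is a proper hyperideal of $R_1$ and that it satisfies the $\alpha$-prime implication.

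First I would check that $I_1$ is a hyperideal: since $f$ is a good homomorphism, $f(a-b)=f(a)-f(b)\in I_2$ whenever $f(a),f(b)\in I_2$, and for $x\in I_1$, $r\in R_1$ we get $f(r\circ x)=f(r)\circ f(x)\subseteq R_2\circ I_2\subseteq I_2$, so $r\circ x\subseteq I_1$. Properness follows because $1\notin I_1$: indeed $f(1)$ must be an identity-like element for the image and $1\notin I_2$ since $I_2$ is proper (otherwise the $\alpha$-prime condition for $I_2$ would force $I_2=R_2$).

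The main step is the $\alpha$-prime implication. Suppose $x\circ y\subseteq I_1$ for some $x,y\in R_1$. Applying $f$ and using that it is a good homomorphism gives
\[
f(x)\circ f(y)\;=\;f(x\circ y)\;\subseteq\;f(f^{-1}(I_2))\;\subseteq\;I_2.
\]
Because $I_2$ is $\alpha$-prime in $R_2$, either $f(x)\in I_2$, in which case $x\in f^{-1}(I_2)=I_1$, or $\alpha(f(y))\in I_2$. In the second case the commutation hypothesis $\alpha\circ f=f\circ\alpha$ converts this to $f(\alpha(y))\in I_2$, so $\alpha(y)\in f^{-1}(I_2)=I_1$. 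This yields exactly $x\in I_1$ or $\alpha(y)\in I_1$, completing the proof.

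I do not anticipate a serious obstacle; the only delicate point is the bookkeeping needed to guarantee that $I_1$ remains a (proper) \textbf{C}-hyperideal, since the standing assumption of the paper is that all hyperideals are \textbf{C}-hyperideals. This reduces to noting that if $A=r_1\circ\cdots\circ r_n\in\mathbf{C}$ meets $I_1$, then $f(A)=f(r_1)\circ\cdots\circ f(r_n)\in\mathbf{C}$ meets $I_2$, so $f(A)\subseteq I_2$ and hence $A\subseteq I_1$; apart from this, the argument is a clean pullback, and the commutation identity $\alpha\circ f=f\circ\alpha$ is used precisely at the one place indicated above.
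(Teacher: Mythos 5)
Your argument is correct and is essentially the same as the paper's: pull back the containment via $f(x\circ y)=f(x)\circ f(y)\subseteq I_2$, apply $\alpha$-primeness of $I_2$, and use the commutation $\alpha\circ f=f\circ\alpha$ to transfer $\alpha(f(y))\in I_2$ into $\alpha(y)\in f^{-1}(I_2)$. The only difference is that you also spell out the routine verifications (hyperideal, properness, \textbf{C}-hyperideal closure of $f^{-1}(I_2)$), which the paper simply omits.
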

\begin{proof}
Assume that the hyperideal $I_2$ of $R_2$ is  $\alpha$-prime. Let $x \circ_1 y \subseteq f^{-1}(I_2)$ for some $x,y \in R_1$. Then $f(x \circ_1 y)=f(x) \circ_2 f(y) \subseteq I_2$. Since $I_2$ is an $\alpha$-prime hyperideal of $R_2$ then $f(x) \in I_2$ which implies $x \in f^{-1}(I_2)$ or $\alpha(f(y))=f(\alpha(y)) \in I_2$ which implies $\alpha(y) \in f^{-1}(I_2)$. Thus $f^{-1}(I_2)$ is an $\alpha$-prime hyperideal of $R_1$.
\end{proof}
\begin{lem} \label{13}
Let $\alpha \in End(R)$. Then $Ker \alpha \subseteq \bigcap_{I {\text \  is \ \alpha-prime \  of \  R}}I$.
\end{lem}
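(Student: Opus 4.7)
The plan is to fix an $\alpha$-prime hyperideal $I$ of $R$ and an element $x \in Ker\,\alpha$, and then establish $x \in I$. The trivial observation is $\alpha(x) = 0 \in I$; the content of the lemma is upgrading this $\alpha$-image condition to honest membership in $I$, since $\alpha(x) \in I$ is the ``wrong-side'' conclusion of the $\alpha$-prime implication.

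My approach is to apply the defining $\alpha$-prime condition to a well-chosen product. For any $y \in R$ one computes $\alpha(x \circ y) = \alpha(x) \circ \alpha(y) = 0 \circ \alpha(y) = \{0\} \subseteq I$ by zero-absorbing, placing every element of $x \circ y$ inside $Ker\,\alpha$. If this can be upgraded to $x \circ y \subseteq I$ for some $y$ with $\alpha(y) \notin I$, then $\alpha$-primeness forces $x \in I$; such $y$ exist because $I$ is proper and $\alpha$ preserves the identity, so $\alpha(1) = 1 \notin I$. The bridge from $\alpha(x \circ y) \subseteq I$ (easy) to $x \circ y \subseteq I$ (what is needed) should be supplied by the standing $\mathbf{C}$-hyperideal hypothesis: $x \circ y$ is a finite product, i.e.\ a member of the class $\mathbf{C}$, and any nontrivial intersection with $I$ would upgrade to full containment. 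Equivalently, I could route the argument through the hyperideal-product reformulation of $\alpha$-primeness proved earlier in this section, applied to $\langle x \rangle \circ R$: since $\alpha$ kills every $R$-multiple of $x$ one has $\alpha(\langle x \rangle) = \{0\}$, and once the $\mathbf{C}$-hyperideal upgrade gives $\langle x \rangle \circ R \subseteq I$, the characterization forces $\langle x \rangle \subseteq I$ (the alternative $\alpha(R) \subseteq I$ contradicting properness), whence $x \in I$.

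The main obstacle in either route is this upgrade step: converting the easy $\alpha$-image containment into honest containment in $I$. The $\mathbf{C}$-hyperideal hypothesis does the essential structural work, ensuring that $\alpha(A) \subseteq \{0\} \subseteq I$ for a product set $A \in \mathbf{C}$ meeting $I$ nontrivially propagates to $A \subseteq I$. Verifying that the relevant products really do meet $I$ (for instance via $0$ sitting inside them by zero-absorbing, or inside an appropriate sub-product) is the technical heart of the argument.
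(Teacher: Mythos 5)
There is a genuine gap, and it sits exactly at the step you yourself flag as the ``technical heart'': the upgrade from $\alpha(x\circ y)\subseteq I$ (equivalently $x\circ y\subseteq Ker\,\alpha$) to $x\circ y\subseteq I$ is never carried out, and the $\mathbf{C}$-hyperideal hypothesis cannot carry it. That hypothesis only upgrades $A\cap I\neq\varnothing$ to $A\subseteq I$ for $A\in\mathbf{C}$, so you would first need to exhibit an element of $x\circ y$ (or of $\langle x\rangle\circ R$) that already lies in $I$. The only element of $I$ available for free is $0$, and nothing places $0$ inside $x\circ y$: the zero-absorbing property is not among the hypotheses of this lemma (it is an extra assumption, invoked only in Theorem \ref{18}), so even your identity $0\circ\alpha(y)=\{0\}$ needs it, and in any case it applies only when one factor \emph{is} $0$, not when a factor merely lies in $Ker\,\alpha$. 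The obstruction is not a removable technicality. Take the multiplicative hyperring $(\mathbb{Z}[t],+,\circ)$ with $a\circ b=\{ab\}$ (every hyperideal is then a $\mathbf{C}$-hyperideal), $\alpha(f)=f(0)$, and $I=\langle 2\rangle$: one checks $I$ is proper and satisfies the $\alpha$-prime condition, $t\in Ker\,\alpha\setminus I$, yet for any $y$ the set $t\circ y=\{ty\}$ meets $I$ only when $\alpha(y)\in I$ as well, so no admissible $y$ exists and neither of your two routes can ever reach the containment it needs. (This also shows the statement itself requires hypotheses beyond those stated; and your ``contradiction with properness'' via $\alpha(R)\subseteq I$ additionally presumes $\alpha(1)=1$, which the paper hypothesizes only in a later theorem.)

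For comparison, the paper's own (very terse) argument goes a different way: from $\alpha(r)\in\langle 0\rangle\subseteq I$ it appeals to Theorem \ref{12} with $f=\alpha$, i.e.\ it works with the pullback $\alpha^{-1}(I)$, which is again $\alpha$-prime and visibly contains $Ker\,\alpha$. It does not attempt the product-upgrade you propose, and indeed that is precisely where the stronger conclusion $r\in I$ (rather than $r\in\alpha^{-1}(I)$) would have to come from; your attempt makes that missing step explicit rather than closing it. To make either argument work one needs an additional assumption, for instance that the $\alpha$-prime hyperideals under consideration satisfy $\alpha^{-1}(I)=I$.
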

\begin{proof}
Let $r \in Ker \alpha$. Therefore $\alpha(r) \in \langle 0 \rangle$. This means $\alpha(r)$ is in every $\alpha$-prime hyperideal $I$ of $R$. Now by using Theorem \ref{12}, the claim can be proved.
\end{proof}
\begin{lem} \label{14}
Let $\langle 0 \rangle$ be a prime hyperideal of $R$ and let $\alpha \in End (R)$. Then $Ker \alpha$ is a prime hyprideal of $R$.
\end{lem}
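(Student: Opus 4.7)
The plan is to unpack the definition of $Ker\,\alpha$ and directly push the hypothesis $x\circ y\subseteq Ker\,\alpha$ through the good endomorphism $\alpha$, so that we can apply the assumed primality of $\langle 0\rangle$. Concretely, recall that $Ker\,\alpha=\{r\in R\mid \alpha(r)\in\langle 0\rangle\}$, so saying that $x\circ y\subseteq Ker\,\alpha$ means $\alpha(t)\in\langle 0\rangle$ for every $t\in x\circ y$, i.e.\ $\alpha(x\circ y)\subseteq\langle 0\rangle$.

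The key observation is that $\alpha$ is a \emph{good} endomorphism, so $\alpha(x\circ y)=\alpha(x)\circ\alpha(y)$. Therefore the hypothesis becomes $\alpha(x)\circ\alpha(y)\subseteq\langle 0\rangle$. Now I would invoke the assumption that $\langle 0\rangle$ is a prime hyperideal of $R$ to conclude $\alpha(x)\in\langle 0\rangle$ or $\alpha(y)\in\langle 0\rangle$, which by definition says $x\in Ker\,\alpha$ or $y\in Ker\,\alpha$. This gives the desired prime-hyperideal condition.

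The remaining routine items are to verify that $Ker\,\alpha$ is actually a (proper) hyperideal of $R$: additive closure follows from $\alpha$ being a group homomorphism on $(R,+)$, absorption follows because for $r\in R$ and $x\in Ker\,\alpha$ we have $\alpha(r\circ x)=\alpha(r)\circ\alpha(x)\subseteq\alpha(r)\circ\langle 0\rangle\subseteq\langle 0\rangle$, and properness follows since $\langle 0\rangle\subsetneq R$ and $\alpha$ fixes the scalar identity (so $1\notin Ker\,\alpha$). Once this bookkeeping is done, the core argument above finishes the proof.

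The main (mild) obstacle is simply keeping the distinction between element membership and subset containment straight when transporting $x\circ y\subseteq Ker\,\alpha$ into the condition $\alpha(x)\circ\alpha(y)\subseteq\langle 0\rangle$; there is no deep difficulty here, because the hypothesis that $\langle 0\rangle$ is prime does all of the real work, and the good-homomorphism property of $\alpha$ is exactly what allows the primality to be inherited by the pre-image.
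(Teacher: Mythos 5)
Your argument is essentially the paper's proof: apply $\alpha$ to the containment $x\circ y\subseteq Ker\,\alpha$, use the good-homomorphism property to get $\alpha(x)\circ\alpha(y)\subseteq\langle 0\rangle$, and invoke primality of $\langle 0\rangle$ to conclude $x\in Ker\,\alpha$ or $y\in Ker\,\alpha$. Your extra bookkeeping that $Ker\,\alpha$ is a hyperideal is fine (the paper omits it), though the properness step quietly assumes $\alpha(1)=1$, which is not a standing hypothesis of the lemma; this does not affect the core argument, which coincides with the paper's.
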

\begin{proof}
Let $a \circ b \subseteq Ker \alpha$ for some $a,b \in R$. This implies that $\alpha(a \circ b)=\alpha(a) \circ \alpha(b) \subseteq \langle 0 \rangle$. Since $\langle 0 \rangle$ is a a prime hyperideal of $R$, then we have $\alpha(a) \in \langle 0 \rangle$ or $\alpha(b) \in \langle 0 \rangle$. This implies that $a \in Ker \alpha$ or $a \in Ker \alpha$. Thus $Ker \alpha$ is a prime hyprideal of $R$.
\end{proof}
\begin{theorem} \label{18}
Let $R$ be a multiplicative hyperring such that it has zero absorbing property. If  $\langle 0 \rangle$ be a prime hyperideal of $R$, then \[Nil_\alpha(R) = \bigcap_{I {\text \  is \ \alpha-prime \  of \  R}}I.\]
\end{theorem}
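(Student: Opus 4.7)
The plan is to prove the two inclusions separately, exploiting two structural facts: that $\langle 0\rangle$ is itself $\alpha$-prime, and that $Ker\,\alpha$ is a genuine prime hyperideal contained in every $\alpha$-prime.

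For the containment $\bigcap_{I}I \subseteq Nil_\alpha(R)$, I would first observe that $\langle 0\rangle$ is an $\alpha$-prime hyperideal of $R$. Indeed, it is prime by hypothesis, and since $\alpha(0)=0$ we have $\alpha(\langle 0\rangle)\subseteq \langle 0\rangle$, so the prime condition ``$x\in\langle 0\rangle$ or $y\in\langle 0\rangle$'' automatically upgrades to ``$x\in\langle 0\rangle$ or $\alpha(y)\in\langle 0\rangle$.'' Consequently the intersection lies inside $\langle 0\rangle$. The zero-absorbing hypothesis forces $\langle 0\rangle=\{0\}$ (since $\{0\}$ is a hyperideal containing $0$), and $0\in Nil_\alpha(R)$ trivially because $\alpha(0^n)=\{0\}$. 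This closes one direction.

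For the containment $Nil_\alpha(R)\subseteq\bigcap_{I}I$, take $x\in Nil_\alpha(R)$, so $0\in\alpha(x^n)$ for some $n$. This means some $t\in x^n$ satisfies $\alpha(t)=0$, that is $t\in Ker\,\alpha$. Since $Ker\,\alpha$ is a hyperideal (straightforward from $\alpha$ being a good endomorphism, using zero absorbing for $R\circ Ker\,\alpha\subseteq Ker\,\alpha$) and therefore a $\mathbf{C}$-hyperideal under the paper's standing assumption, the element $x^n\in\mathbf{C}$ meeting $Ker\,\alpha$ gives $x^n\subseteq Ker\,\alpha$. Now Lemma \ref{14} says $Ker\,\alpha$ is \emph{prime}, and iterating primeness on the factorization $x^n=x\circ x^{n-1}$ produces $x\in Ker\,\alpha$. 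An application of Lemma \ref{13} then yields $x\in I$ for every $\alpha$-prime hyperideal $I$, which is what we want.

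The main obstacle, and the reason the argument must route through $Ker\,\alpha$ rather than proceed inside the $\alpha$-prime $I$ directly, is that Lemma \ref{11} only delivers $\alpha(x)\in I$ from $x^n\subseteq I$, not $x\in I$; an $\alpha$-prime hyperideal is not in general ``radical-closed'' in the usual sense. The fix is precisely Lemma \ref{14}: primality of $\langle 0\rangle$ lifts to primality of $Ker\,\alpha$, and genuine primality does permit honest root extraction $x^n\subseteq Ker\,\alpha\Rightarrow x\in Ker\,\alpha$. The rest is bookkeeping.
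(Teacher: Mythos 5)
Your argument is correct at the paper's level of rigor, and while your treatment of the inclusion $Nil_\alpha(R)\subseteq\bigcap I$ coincides with the paper's, your treatment of the reverse inclusion is genuinely different and simpler. For $Nil_\alpha(R)\subseteq\bigcap I$ you do exactly what the paper does: from $0\in\alpha(x^n)$ pass to $x^n\subseteq Ker \alpha$ using the standing \textbf{C}-hyperideal assumption, invoke Lemma \ref{14} for the primality of $Ker \alpha$, extract the root to get $x\in Ker \alpha$, and finish with Lemma \ref{13}; you merely make explicit the steps the paper compresses, including the correct remark that Lemma \ref{11} alone would only give $\alpha(x)\in I$ and so cannot replace the detour through $Ker \alpha$. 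For $\bigcap I\subseteq Nil_\alpha(R)$ the paper argues by contradiction via Zorn's lemma: it forms the family $\Sigma$ of hyperideals $J$ with $\alpha(r^n)\nsubseteq J$ for all $n$, takes a maximal element $P$, and tries to show $P$ is $\alpha$-prime (the verification there is only sketched, and as written it yields the weaker statement that $x\circ y\subseteq P$ forces $\alpha(x)\in P$ or $\alpha(y)\in P$). You instead note that the hypothesis that $\langle 0\rangle$ is prime, together with $\alpha(0)=0$ (hence $\alpha(\langle 0\rangle)\subseteq\langle 0\rangle$), makes $\langle 0\rangle$ itself an $\alpha$-prime hyperideal, so the intersection is contained in $\langle 0\rangle=\{0\}\subseteq Nil_\alpha(R)$ by zero absorption. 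This avoids Zorn's lemma entirely and sidesteps the delicate maximality verification; it also exposes that under the stated hypotheses both sides of the asserted equality collapse to $\{0\}$, so the real content of the theorem is the forward inclusion, whereas the paper's maximality machinery is the tool one would want in a more general setting (for instance, to represent $\alpha$-radicals as intersections of $\alpha$-primes without assuming $\langle 0\rangle$ prime). One caveat, which applies equally to the paper's own proof and so is not a gap specific to yours: the forward inclusion rests on Lemma \ref{13} and on $Ker \alpha$ being proper in Lemma \ref{14}, assumptions the paper itself takes for granted.
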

\begin{proof}
Let $r \in Nil_\alpha(R)$. Then we have $0 \in \alpha(r^n)$ which means $r^n \subseteq Ker \alpha$. Thus we get $r \in Ker \alpha$, by Lemma \ref{14}. Hence we conclude that  $r \in \bigcap_{I {\text \  is \ \alpha-prime \  of \  R}}I$. Then $Nil_\alpha(R)$ is in the intersection of
all $\alpha$-prime hyperideals of $R$.\\ Now, assume that $r \in \bigcap_{I {\text \  is \ \alpha-prime \  of \  R}}I$ but $r \notin Nil_\alpha(R)$. Consider the set \[\Sigma=\{J \ \vert \ \text{J is a hyperideal of R and for all n>0, $\alpha(r^n) \nsubseteq$ J }\}.\] Since $0 \in \Sigma$ then $\Sigma \neq \varnothing$. Order $\Sigma$ by inclusion. Assume that $\{J_i\}_{i \in \Delta}$ is a chain of hyperideals in $\Sigma$, then for each pair of indices $t,s$ we have either $J_{t} \subseteq J_{s}$ or $J_{s} \subseteq J_{t}$. Let $J=\bigcup_{i \in \Delta} J_i$. Clearly, $J$ is a hyperideal and is an upper bound of the chain. Thus by Zorn's lemma $\Sigma$ has a
maximal element. Let $P$ is a maximal element of $\Sigma$. Assume that $\alpha(x) \notin P$ and $\alpha(y) \notin P$ for some $x,y \in R$. Hence  $P+\langle \alpha(x) \rangle$ and $P+\langle \alpha(y) \rangle$ are not in $\Sigma$. Then we have $\alpha(r^m) \subseteq P+\langle \alpha(x) \rangle$ and $\alpha(r^n) \subseteq P+\langle \alpha(y) \rangle$ for some integers $m,n >0$. Therefore $\alpha(r^{m+n}) \subseteq P+\langle \alpha(x \circ y) \rangle$. This means $P+\langle \alpha(x \circ y) \rangle \notin \Sigma$ which implies $\alpha(x \circ y)=\alpha(x) \circ \alpha (y) \nsubseteq P$. Thus by Lemma \ref{11} $x \circ y \nsubseteq P$. Since $P$ is an $\alpha$-prime hyperideal of $R$ and $\alpha(r^n) \nsubseteq P$ then $r \notin P$ which  is a contradiction. Therefore  $r \in Nil_\alpha(R)$ and the proof is completed.
\end{proof}
\begin{definition}
Let $J$ be a hyperideal of $R$ such that $R$  has zero absorbing property. The $\alpha$-radical of $J$ is defined by
\[\sqrt[\alpha]{J}=\{r \in R \ \vert \ \alpha(r^n) \subseteq J \ \text {for some} \  n \in \mathbb{N}\}\]
\end{definition}
\begin{theorem}
$Nil_{\alpha}(R) \subseteq \sqrt[\alpha]{\langle 0 \rangle}$
\end{theorem}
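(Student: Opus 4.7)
The plan is to exploit the standing assumption that every hyperideal is a $\mathbf{C}$-hyperideal, which will allow me to upgrade the elementwise condition $0 \in \alpha(x^n)$ defining $Nil_\alpha(R)$ to the set-level containment $\alpha(x^n) \subseteq \langle 0 \rangle$ required for membership in $\sqrt[\alpha]{\langle 0 \rangle}$.

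Concretely, I would fix $x \in Nil_\alpha(R)$, so that $0 \in \alpha(x^n)$ for some $n > 0$. My first step is to use that $\alpha$ is a good endomorphism in order to pass $\alpha$ through the hyperproduct, obtaining $\alpha(x^n) = \alpha(x) \circ \alpha(x) \circ \cdots \circ \alpha(x) = \alpha(x)^n$. This exhibits $\alpha(x^n)$ as a finite hyperproduct of elements of $R$, hence as a member of the class $\mathbf{C}$. Since $0 \in \alpha(x^n) \cap \langle 0 \rangle$ is a nonempty intersection and $\langle 0 \rangle$ is (by the standing assumption) a $\mathbf{C}$-hyperideal, the defining property of a $\mathbf{C}$-hyperideal forces $\alpha(x^n) \subseteq \langle 0 \rangle$. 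By the definition of the $\alpha$-radical, this reads exactly as $x \in \sqrt[\alpha]{\langle 0 \rangle}$, completing the inclusion.

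The only conceptual step is recognizing where the $\mathbf{C}$-hyperideal hypothesis enters: without it, $0 \in \alpha(x^n)$ is strictly weaker than $\alpha(x^n) \subseteq \langle 0 \rangle$, because in a general multiplicative hyperring the hyperproduct $\alpha(x^n)$ can meet $\langle 0 \rangle$ without being contained in it. Once that observation is in place the argument collapses to a single line, so I do not anticipate any further obstacles.
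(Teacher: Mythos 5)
Your proof is correct and follows essentially the same route as the paper: both arguments use the standing $\mathbf{C}$-hyperideal assumption on $\langle 0 \rangle$ to upgrade $0 \in \alpha(x^n)$ to $\alpha(x^n) \subseteq \langle 0 \rangle$ and conclude $x \in \sqrt[\alpha]{\langle 0 \rangle}$. You are in fact slightly more careful than the paper, since you justify that $\alpha(x^n)=\alpha(x)^n$ lies in the class $\mathbf{C}$ before invoking the $\mathbf{C}$-hyperideal property, a step the paper leaves implicit.
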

 \begin{proof}
 Let $r \in Nil_{\alpha}(R)$. Then there exists some $n \in \mathbb{N}$ such that $0 \in \alpha(r^n)$. Since $\langle 0 \rangle$ is a {\bf C}-hyperideal and $0 \in \langle 0 \rangle$ then $\alpha(r^n) \subseteq \langle 0 \rangle$. Therefore $r \in \sqrt[\alpha]{\langle 0 \rangle}$. Thus $Nil_{\alpha}(R) \subseteq \sqrt[\alpha]{\langle 0 \rangle}$.
 \end{proof}
If $R$ is a multiplicative hyperring such that it has zero absorbing property, then we have $Nil_{\alpha}(R) = \sqrt[\alpha]{\langle 0 \rangle}$
\begin{theorem} \label{15}
Let $A,B$ be two hyperideals of $R$. Then we have the following statements:
\begin{itemize}
\item[\rm(i)]~ If $A \subseteq B$, then $\sqrt[\alpha]{A} \subseteq \sqrt[\alpha]{B}$.
\item[\rm(ii)]~$\sqrt[\alpha]{A + B} \subseteq  \sqrt[\alpha]{\sqrt[\alpha]{A} + \sqrt[\alpha]{ B}}$
\item[\rm(iii)]~$\sqrt[\alpha]{A \circ B}=\sqrt[\alpha]{A \cap B}=\sqrt[\alpha]{A} \cap \sqrt[\alpha]{B}$.
\end{itemize}
\end{theorem}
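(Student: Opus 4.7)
\emph{Plan.} I would treat the three parts in the order (i), (iii), (ii), because (iii) needs only (i) together with a power-doubling trick, whereas (ii) calls for an auxiliary inclusion $A\subseteq\sqrt[\alpha]{A}$ as a separate input.

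Part (i) is immediate from the definition: an exponent $n$ witnessing $\alpha(r^n)\subseteq A$ also witnesses $\alpha(r^n)\subseteq B$ once $A\subseteq B$, so $r\in\sqrt[\alpha]{A}$ forces $r\in\sqrt[\alpha]{B}$.

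For (iii), I would prove the two equalities separately. The containment $\sqrt[\alpha]{A\circ B}\subseteq\sqrt[\alpha]{A\cap B}$ follows from (i) applied to $A\circ B\subseteq A\cap B$, which in turn comes from the hyperideal absorption $A\circ R\subseteq A$ and $R\circ B\subseteq B$. For the reverse, given $r\in\sqrt[\alpha]{A\cap B}$ with $\alpha(r^n)\subseteq A\cap B$, a squaring step yields $\alpha(r^{2n})=\alpha(r^n)\circ\alpha(r^n)\subseteq A\circ B$, so $r\in\sqrt[\alpha]{A\circ B}$. Next, $\sqrt[\alpha]{A\cap B}\subseteq\sqrt[\alpha]{A}\cap\sqrt[\alpha]{B}$ is again just (i) applied to $A\cap B\subseteq A$ and $A\cap B\subseteq B$. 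For the reverse, I would take witnesses $\alpha(r^n)\subseteq A$ and $\alpha(r^m)\subseteq B$ and combine them at the common exponent $k=n+m$: then $\alpha(r^k)=\alpha(r^n)\circ\alpha(r^m)$ lies in $A\circ R\subseteq A$ and also in $R\circ B\subseteq B$, hence in $A\cap B$, so $r\in\sqrt[\alpha]{A\cap B}$.

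For (ii), the plan is to combine (i) with the auxiliary inclusions $A\subseteq\sqrt[\alpha]{A}$ and $B\subseteq\sqrt[\alpha]{B}$. Once these are in hand, termwise addition gives $A+B\subseteq\sqrt[\alpha]{A}+\sqrt[\alpha]{B}$, and a single application of (i) to this containment delivers the claim. The main obstacle is the auxiliary inclusion itself: for $a\in A$ one must produce an exponent $n$ with $\alpha(a^n)\subseteq A$, and in the absence of a standing hypothesis $\alpha(A)\subseteq A$ this is not transparent. I would attack it using the standing C-hyperideal convention together with the good-homomorphism property $\alpha(a^n)=(\alpha(a))^n$, trying to witness the inclusion by choosing $n$ so that the hyperproduct $(\alpha(a))^n$ is forced back into $A$ by hyperideal absorption against $a$ itself; this is the place where the argument is genuinely delicate and where I expect to spend most of the effort.
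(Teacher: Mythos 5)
Your parts (i) and (iii) are correct and essentially reproduce the paper's own argument: (i) is read directly off the definition, and for (iii) the paper uses exactly your two steps, namely $A\circ B\subseteq A\cap B$ followed by the squaring identity $\alpha(r^{2n})=\alpha(r^n)\circ\alpha(r^n)\subseteq A\circ B$, and then combines the two witnesses to get $\sqrt[\alpha]{A}\cap\sqrt[\alpha]{B}\subseteq\sqrt[\alpha]{A\cap B}$; the only cosmetic difference is that the paper raises $r$ to the exponent $\max\{s,t\}$ where you use $s+t$, both justified by the same absorption argument that you, unlike the paper, actually spell out.

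For (ii) your plan coincides with the paper's (deduce $A+B\subseteq\sqrt[\alpha]{A}+\sqrt[\alpha]{B}$ from $A\subseteq\sqrt[\alpha]{A}$ and $B\subseteq\sqrt[\alpha]{B}$, then apply (i)), but you leave the auxiliary inclusion unproved, and the attack you sketch cannot close it: $\alpha(a^n)=(\alpha(a))^n$ is a hyperproduct of copies of $\alpha(a)$ alone, so no factor of it lies in $A$ and hyperideal absorption has nothing to act on; the {\bf C}-hyperideal convention does not help either. What the step really needs is $\alpha$-invariance: if $\alpha(A)\subseteq A$ then $\alpha(a)\in A$ and hence $(\alpha(a))^n\subseteq A$ for every $n$, giving $A\subseteq\sqrt[\alpha]{A}$ at once; this holds, e.g., when $\alpha$ is the identity or when $A$ is $\alpha$-prime (the paper's earlier result $\alpha(I)\subseteq I$), but not for an arbitrary hyperideal. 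Indeed, without invariance both the inclusion and statement (ii) itself can fail: view the ordinary ring $\mathbb{Z}\times\mathbb{Z}$ as a multiplicative hyperring with singleton products, let $\alpha(u,v)=(v,u)$, $A=\mathbb{Z}\times\{0\}$, $B=\langle 0\rangle$; then $\sqrt[\alpha]{A+B}=\{0\}\times\mathbb{Z}$ while $\sqrt[\alpha]{\sqrt[\alpha]{A}+\sqrt[\alpha]{B}}=\mathbb{Z}\times\{0\}$, so the asserted containment is false. So the step you flagged as delicate is precisely the step the paper glosses over (it asserts $A\subseteq\sqrt[\alpha]{A}$ with no justification); to make (ii) rigorous, add the hypothesis $\alpha(A)\subseteq A$ and $\alpha(B)\subseteq B$ (or restrict to $\alpha$-prime or $\alpha$-invariant hyperideals), under which your plan goes through verbatim.
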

\begin{proof}
\begin{itemize}
\item[\rm i.]~Straightforward.
\item[\rm ii.]~ Since $A \subseteq \sqrt[\alpha]{A}$ and $B \subseteq \sqrt[\alpha]{B}$ then we have $A+B \subseteq \sqrt[\alpha]{A}+\sqrt[\alpha]{B}$. Thus we get $\sqrt[\alpha]{A + B} \subseteq  \sqrt[\alpha]{\sqrt[\alpha]{A} + \sqrt[\alpha]{ B}}$, by (i).
\item[\rm iii.]~ Here $A \circ B \subseteq A \cap B$. Then $\sqrt[\alpha]{ A \circ B}  \subseteq \sqrt[\alpha]{ A \cap B}$. Now, let $r \in \sqrt[\alpha]{ A \cap B}$. So $\alpha(r^n) \subseteq A \cap B$ for some $n \in \mathbb{N}$. Hence we have $\alpha(r^n) \circ \alpha(r^n)=\alpha(r^{2n}) \subseteq A \circ B$ which means $r \in \sqrt[\alpha]{ A \circ  B}$. Finally, let $r \in \sqrt[\alpha]{A} \cap \sqrt[\alpha]{B}$. This means $\alpha(r^s) \subseteq A$ and $\alpha(r^t) \subseteq B$ for some $t, s \in \mathbb{N}$. Then we have $\alpha(r^m) \subseteq A \cap B$ for $m=max\{t,s\}$. This implies that $r \in \sqrt[\alpha]{A \cap B}$. For the reverse inclusion, since $A \cap B \subseteq A$ and $A \cap B \subseteq A$ then we get $\sqrt[\alpha]{A \cap B} \subseteq \sqrt[\alpha]{A}  \cap \sqrt[\alpha]{A}$.
\end{itemize}
\end{proof}
\begin{theorem}
Let $A$ be a hyperideal of $R$. 
\begin{itemize}
\item[\rm(1)]~If $\alpha(1)=1$, then $\sqrt[\alpha]{A}=R$ if and only if $I=R$.
\item[\rm(2)]~ If the hyperideal $A$ is $\alpha$-prime, then $\sqrt[\alpha]{A^n}=\sqrt[\alpha]{A}$, for all $n \in \mathbb{N}$.
\end{itemize}
\end{theorem}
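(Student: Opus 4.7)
The proof separates cleanly into two independent arguments, one per item.

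For part (1), I plan to handle the reverse implication by observation: if $A=R$, then every $r \in R$ satisfies $\alpha(r) \subseteq R = A$, so trivially $\sqrt[\alpha]{A}=R$. For the forward direction, I specialize at $1$. Since $1 \in R = \sqrt[\alpha]{A}$, there is some $n$ with $\alpha(1^n) \subseteq A$. The identity element satisfies $1 \in 1^n$, so $\alpha(1) \in \alpha(1^n) \subseteq A$, and the hypothesis $\alpha(1)=1$ then forces $1 \in A$, whence $A = R$.

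For part (2), I plan to prove the two inclusions directly. The containment $\sqrt[\alpha]{A^n} \subseteq \sqrt[\alpha]{A}$ follows from Theorem \ref{15}(i) once we note $A^n \subseteq A$, which is a routine consequence of the hyperideal absorption rule (for $x \in A$ and any $r \in R$, $r \circ x \subseteq A$; taking $r \in A$ iteratively yields $A^n \subseteq A$). For the reverse containment, I start with $r \in \sqrt[\alpha]{A}$ and pick $m$ with $\alpha(r^m) \subseteq A$. The key computation is to raise both sides to the $n$-th power: associativity in the semihypergroup $(R,\circ)$ gives $r^{mn} = (r^m)^n$, and since $\alpha$ is a good endomorphism this yields
\[
\alpha(r^{mn}) \;=\; \alpha\bigl((r^m)^n\bigr) \;=\; (\alpha(r^m))^n \;\subseteq\; A^n,
\]
placing $r$ in $\sqrt[\alpha]{A^n}$.

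I do not anticipate any serious obstacle, since both parts reduce to short manipulations with the definitions. The one point worth flagging is that the $\alpha$-prime hypothesis in (2) does not actually enter my argument; the identity $\sqrt[\alpha]{A^n} = \sqrt[\alpha]{A}$ appears to hold for every hyperideal $A$ (under the standing \textbf{C}-hyperideal assumption), so the hypothesis is included presumably only for contextual emphasis.
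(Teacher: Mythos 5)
Your proof is correct. Part (1) is essentially the paper's own argument (the paper proves only the forward direction, treating the converse as trivial, and like you it reads $1\in 1^n$ off the identity property to get $\alpha(1)=1\in A$). Part (2) takes a genuinely different, more hands-on route: the paper simply iterates Theorem \ref{15}(iii), writing $\sqrt[\alpha]{A^n}=\sqrt[\alpha]{A}\cap\cdots\cap\sqrt[\alpha]{A}=\sqrt[\alpha]{A}$, whereas you prove the two inclusions directly, with $A^n\subseteq A$ plus monotonicity for one direction and the computation $\alpha(r^{mn})=\bigl(\alpha(r^m)\bigr)^n\subseteq A^n$ for the other. The two arguments are close cousins, since your power computation is exactly the engine inside the proof of Theorem \ref{15}(iii); what your version buys is self-containedness and it sidesteps the question of whether $A^{n-1}$, which the paper's iteration implicitly feeds into Theorem \ref{15}(iii), is itself a hyperideal. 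A symmetric caveat applies to you: Theorem \ref{15}(i) is stated for hyperideals and $A^n$ need not be one, but monotonicity of $\sqrt[\alpha]{\cdot}$ is immediate from the definition for arbitrary subsets, so this is harmless. Your closing remark is also accurate and applies equally to the paper: the $\alpha$-prime hypothesis in (2) is never used in either proof, since Theorem \ref{15}(iii) holds for arbitrary hyperideals.
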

\begin{proof}
\begin{itemize}
\item[\rm(1)]~ Let $\sqrt[\alpha]{A} = R$. This means $1 \in \sqrt[\alpha]{A}$. Hence $\alpha(1^n) \subseteq A$ which implis $\alpha(1)=1 \in A$. Thus $A=R$
\item[\rm(2)]~ Let the hyperideal $A$ be $\alpha$-prime. Then $\sqrt[\alpha]{A^n}=\sqrt[\alpha]{A} \cap ... \cap \sqrt[\alpha]{A}$ for all $n \in \mathbb{N}$, by Theorem \ref{15} (iii). Thus $\sqrt[\alpha]{A^n}=\sqrt[\alpha]{A}$.
\end{itemize}
\end{proof}
\begin{theorem}
Let  $R_1$ and $R_2$ be two multiplicative hyperrings and  $f:R_1 \rightarrow R_2$  a good homomorphism such that $I_1$ and $I_2$ are hyperideals of $R_1$ and $R_2$, respectively. Assume that  $\alpha \in End(R_1) \cap End(R_2)$ such that $\alpha(f(r))=f(\alpha(r))$ for every $r \in R_1$. Then 
\begin{itemize}
\item[\rm(1)]~$ f(\sqrt[\alpha]{I_1}) \subseteq \sqrt[\alpha]{f(I_1)}$.
\item[\rm(2)]~$ \sqrt[\alpha]{f^{-1}(I_2)} \subseteq f^{-1}(\sqrt[\alpha]{I_2})$.
\item[\rm(3)]~If $f$ is an isomorphism, then  $f(\sqrt[\alpha]{I_1}) = \sqrt[\alpha]{f(I_1)}$
\end{itemize}
\end{theorem}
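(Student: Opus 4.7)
The plan is to unravel each inclusion directly from the definition $\sqrt[\alpha]{J}=\{r\in R\mid \alpha(r^n)\subseteq J \text{ for some } n\in\mathbb{N}\}$, using the two hypotheses we are handed: $f$ is a good homomorphism (so $f(x\circ y)=f(x)\circ f(y)$, hence $f(r^n)=f(r)^n$ for all $n$) and $f$ commutes with $\alpha$ on elements, which by additivity/multiplicativity extends to $f(\alpha(S))=\alpha(f(S))$ for any subset $S$ of $R_1$.

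For (1), I would take $y\in f(\sqrt[\alpha]{I_1})$, write $y=f(r)$ with $r\in\sqrt[\alpha]{I_1}$, and pick $n\in\mathbb{N}$ with $\alpha(r^n)\subseteq I_1$. Applying $f$ pointwise gives $f(\alpha(r^n))\subseteq f(I_1)$, and the identity $f\circ\alpha=\alpha\circ f$ combined with $f(r^n)=f(r)^n$ converts the left-hand side into $\alpha(y^n)$; hence $\alpha(y^n)\subseteq f(I_1)$, i.e.\ $y\in\sqrt[\alpha]{f(I_1)}$. For (2), I would run essentially the same computation in the opposite direction: start with $r\in\sqrt[\alpha]{f^{-1}(I_2)}$, get $\alpha(r^n)\subseteq f^{-1}(I_2)$, push $f$ through to obtain $f(\alpha(r^n))=\alpha(f(r)^n)\subseteq f(f^{-1}(I_2))\subseteq I_2$, and conclude $f(r)\in\sqrt[\alpha]{I_2}$, i.e.\ $r\in f^{-1}(\sqrt[\alpha]{I_2})$.

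For (3), the forward inclusion is (1). For the reverse, let $y\in\sqrt[\alpha]{f(I_1)}$, so $\alpha(y^n)\subseteq f(I_1)$ for some $n$; since $f$ is an isomorphism, $y=f(r)$ for a unique $r\in R_1$, and the compatibility gives $f(\alpha(r^n))=\alpha(f(r)^n)=\alpha(y^n)\subseteq f(I_1)$. Here is the one place the surjectivity/injectivity really bite: the containment $f(\alpha(r^n))\subseteq f(I_1)$ transfers back to $\alpha(r^n)\subseteq I_1$ exactly because $f$ is injective, whence $r\in\sqrt[\alpha]{I_1}$ and $y=f(r)\in f(\sqrt[\alpha]{I_1})$.

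The main (and essentially only) obstacle is bookkeeping: being honest that $\alpha$ and $f$ are defined on elements but are being applied to sets such as $r^n$, and justifying $f(r^n)=f(r)^n$ and $f(\alpha(S))=\alpha(f(S))$ as set-equalities before chaining the inclusions. Once these pointwise-to-set extensions are in hand, all three items follow from one-line manipulations, with injectivity of $f$ being the lone extra input needed in part (3).
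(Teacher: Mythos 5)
Your proof is correct and follows essentially the same route as the paper: unwind the definition of $\sqrt[\alpha]{\cdot}$ and use the set-level identities $f(r^n)=f(r)^n$ and $f(\alpha(S))=\alpha(f(S))$ to push $\alpha(r^n)\subseteq I$ through $f$ in the appropriate direction. The only difference is that for part (3) the paper merely asserts the claim ``follows by (1)'', whereas you make the reverse inclusion explicit via injectivity of $f$ (transferring $f(\alpha(r^n))\subseteq f(I_1)$ back to $\alpha(r^n)\subseteq I_1$), which is a welcome filling-in of the same idea rather than a different approach.
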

\begin{proof}
\begin{itemize}
\item[\rm(1)] Let $y \in f(\sqrt[\alpha]{I_1})$. Then there exists some $x \in \sqrt[\alpha]{I_1}$ such that $f(x)=y$. Hence we have $\alpha(x^n) \subseteq I_1$ for some $n \in \mathbb{N}$. Therefore $\alpha(y^n)=\alpha(f(x)^n)=\alpha(f(x^n))$. Since $\alpha$ commutes with $f$, then we get $\alpha(f(x^n))=f(\alpha(x^n)) \subseteq f(I_1)$. Thus $y \in \sqrt[\alpha]{f(I_1)}$ which means  $ f(\sqrt[\alpha]{I_1}) \subseteq \sqrt[\alpha]{f(I_1)}$.
\item[\rm(2)] Let $x \in \sqrt[\alpha]{f^{-1}(I_2)}$. Then we get $\alpha(x^n) \subseteq f^{-1}(I_2)$ for some $n \in \mathbb{N}$. Therefore $f(\alpha(x^n)) \subseteq I_2$ which implies $\alpha(f(x)^n) \subseteq I_2$. So $x \in f^{-1}(\sqrt[\alpha]{I_2})$. 
\item[\rm(3)] Let $f$ is an isomorphism. The  claim follows by (1).
\end{itemize}
\end{proof}
\begin{theorem}
Let $\alpha \in End(R)$. Assume that $I$ is a hyperideal of $R$ such that for all $a,b \in R$, $a\circ b \subseteq I$ implies $a \in I$ or $\alpha(b^n) \subseteq I$ for some $n \in \mathbb{N}$. Then $\sqrt[\alpha]{I}$ is an $\alpha$-prime hyperideal of $R$.  
\end{theorem}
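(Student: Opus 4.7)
The plan is to mirror the argument already used in the earlier lemma that proves $\sqrt{I}$ is $\alpha$-prime. Suppose $a \circ b \subseteq \sqrt[\alpha]{I}$; the goal is to show $a \in \sqrt[\alpha]{I}$ or $\alpha(b) \in \sqrt[\alpha]{I}$. Using the standing convention that every hyperideal is a $\mathbf{C}$-hyperideal, together with the definition of the $\alpha$-radical, I would first extract a uniform exponent $n \in \mathbb{N}$ with $\alpha((a \circ b)^n) \subseteq I$. Since $\alpha$ is a good endomorphism, this rewrites as
\[
\alpha(a)^n \circ \alpha(b)^n \;=\; \alpha(a^n) \circ \alpha(b^n) \;\subseteq\; I.
\]

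Next, I would apply the hypothesis element-wise. For any $u \in \alpha(a)^n$ and any $v \in \alpha(b)^n$ one has $u \circ v \subseteq I$ with $u,v \in R$, so the hypothesis yields $u \in I$ or $\alpha(v^{k}) \subseteq I$ for some $k \in \mathbb{N}$. If every $u \in \alpha(a)^n$ lies in $I$, then $\alpha(a^n) \subseteq I$ and hence $a \in \sqrt[\alpha]{I}$, and we are done. Otherwise, pick $u_0 \in \alpha(a)^n$ with $u_0 \notin I$; then for each $v \in \alpha(b)^n$ the hypothesis applied to $u_0 \circ v \subseteq I$ forces $\alpha(v^{k_v}) \subseteq I$ for some $k_v$, i.e.\ $v \in \sqrt[\alpha]{I}$. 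Consequently $\alpha(b)^n \subseteq \sqrt[\alpha]{I}$.

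To close this second case I would invoke the $\mathbf{C}$-hyperideal extraction one more time to obtain a common $k$ with
\[
\alpha\bigl((\alpha(b)^n)^k\bigr) \;=\; \alpha\bigl(\alpha(b)^{nk}\bigr) \;\subseteq\; I,
\]
which is exactly the statement that $\alpha(b) \in \sqrt[\alpha]{I}$ (taking $m = nk$ in the definition of the $\alpha$-radical). Combining the two cases completes the argument.

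The main obstacle, in my view, is precisely the two ``uniform exponent'' steps: pointwise each element of $a \circ b$ (respectively of $\alpha(b)^n$) comes with its own witness exponent from the definition of $\sqrt[\alpha]{I}$, and a careful proof must collapse these to a single $n$ (respectively $k$). This is what the paper's blanket assumption that every hyperideal is a $\mathbf{C}$-hyperideal buys us: whenever some element of a product set $A \in \mathbf{C}$ lies in a hyperideal, the whole of $A$ does. The same move is already used implicitly in the earlier proof for $\sqrt{I}$, so the convention is in force here; once it is granted, the case analysis above is routine.
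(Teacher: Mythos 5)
Your proposal is correct and follows essentially the same route as the paper's proof: reduce $a\circ b\subseteq\sqrt[\alpha]{I}$ to $\alpha(a^n)\circ\alpha(b^n)\subseteq I$ via the $\mathbf{C}$-hyperideal extraction, apply the hypothesis to elements of these product sets, and use the $\mathbf{C}$-hyperideal property again to upgrade a single membership to the containments $\alpha(a^n)\subseteq I$ or $\alpha(\alpha(b)^{nk})\subseteq I$. The only difference is cosmetic: the paper fixes one pair $t\in a^n$, $s\in b^n$ and applies the hypothesis once, whereas you run a dichotomy over all elements of $\alpha(a)^n$; moreover, you make explicit the uniform-exponent issue that the paper passes over silently, which is a point in your write-up's favor.
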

\begin{proof}
Let $x \circ y \subseteq \sqrt[\alpha]{I}$ for some $x,y \in R$. This means $\alpha((x \circ y)^n)=\alpha(x^n) \circ \alpha(y^n) \subseteq I$ for some $n \in \mathbb{N}$. Let $t \in x^n$ and $s \in y^n$ for some $t,s \in R$. Therefore $\alpha(t)\circ \alpha(s) \subseteq \alpha(x^n) \circ \alpha(y^n) \subseteq I$. By assumption, we get $\alpha(t) \in I$ or $\alpha(\alpha(s)^m) \subseteq I$ for some $m \in \mathbb{N}$. Since $I$ is a {\bf C}-hyperideal of $R$ and $\alpha(x^n) \cap I \neq \varnothing$ or $\alpha(\alpha(y)^{nm}) \cap I \neq \varnothing$, then we have $\alpha(x^n) \subseteq  I$ or $\alpha(\alpha(y)^{nm}) \subseteq I $. This implies that $x \in \sqrt[\alpha]{I}$ or $\alpha(y) \in \sqrt[\alpha]{I}$ which means $\sqrt[\alpha]{I}$ is an $\alpha$-prime hyperideal of $R$.
\end{proof}

\begin{theorem} \label{17}
Let $I$ be a hyperideal of $R$. Then $I$ is $\alpha$-prime if and only if every zero divizor of $R/I$ is in $Ker \alpha$.
\end{theorem}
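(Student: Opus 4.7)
The plan is to work in the quotient $R/I$ and use the induced endomorphism $\bar{\alpha}\colon R/I \to R/I$ defined by $\bar{\alpha}(r+I)=\alpha(r)+I$. This map is well-defined precisely when $\alpha(I)\subseteq I$; for the forward implication this is guaranteed by the earlier theorem that every $\alpha$-prime hyperideal is $\alpha$-stable, and for the converse it is implicit in the hypothesis itself, since the phrase ``$Ker\,\bar{\alpha}$'' is meaningless otherwise. Under this setup, the statement ``$\bar{a}\in Ker\,\bar{\alpha}$'' is just a compact rewording of ``$\alpha(a)\in I$''.

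For the forward direction, I would pick a zero divisor $\bar{a}\in R/I$ and produce some $\bar{b}\neq\bar{0}$ with $\bar{0}\in \bar{a}\circ\bar{b}$, equivalently $a\circ b\subseteq I$. Using commutativity of $R$ I rewrite this as $b\circ a\subseteq I$, and the $\alpha$-prime property then yields $b\in I$ or $\alpha(a)\in I$. Since $\bar{b}\neq\bar{0}$ rules out the first alternative, I conclude $\alpha(a)\in I$, so $\bar{a}\in Ker\,\bar{\alpha}$.

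For the converse, assume every zero divisor of $R/I$ lies in $Ker\,\bar{\alpha}$ and take $x,y\in R$ with $x\circ y\subseteq I$. If $x\in I$ we are done. If $y\in I$, then $\alpha(y)\in\alpha(I)\subseteq I$ and again we are done. Otherwise $\bar{x},\bar{y}$ are both nonzero in $R/I$ and $\bar{0}\in\bar{x}\circ\bar{y}$, which makes $\bar{y}$ a (nonzero) zero divisor; the hypothesis then gives $\alpha(y)\in I$, so $I$ is $\alpha$-prime.

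The main obstacle is the borderline case $y\in I$ (equivalently $\bar{y}=\bar{0}$) in the converse direction: the zero-divisor hypothesis says nothing about the zero element of $R/I$, so we must recover $\alpha(y)\in I$ from the $\alpha$-stability of $I$. This is why it is essential to record at the outset that the very formulation of the hypothesis forces $\alpha(I)\subseteq I$ and thus legitimizes the use of the induced endomorphism $\bar{\alpha}$ throughout.
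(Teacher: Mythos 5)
Your proof is correct and takes essentially the same route as the paper: both directions translate $x\circ y\subseteq I$ into a zero-divisor statement in $R/I$ via $(x+I)(y+I)=x\circ y+I$, exactly as the paper does. You are in fact a bit more careful than the paper, which in the converse only treats the case $x,y\notin I$ and leaves the borderline case $y\in I$, $x\notin I$ (where one needs $\alpha(I)\subseteq I$, i.e.\ the well-definedness of the induced map on $R/I$) unaddressed.
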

\begin{proof}
Let $I$ be an $\alpha$-prime  hyperideal of $R$. Let $0_{R/I} \neq y+I$ be a zero divizor of $R/P$. Then there exists $0_{R/I} \neq x+I$ such that $0_{R/I} \in (x+I)(y+I)=x \circ y +I$. This implies that $x \circ y \subseteq I$. Since the hyperideal $I$ of $R$ is $\alpha$-prime, then we get $x \in I$ or $\alpha(y) \in I$. Since  $0_{R/I} \neq x+I$, then we have $\alpha(y) \in I$ and so $\alpha(y+I) \subseteq I$which means $y+I$ is in $Ker \alpha$. Conversely, Let $x \circ y \subseteq I$ such that $x,y \notin I$ for some $x,y \in R$. Then $I \in x \circ y + I=(x+I)(y+I)$. Then $y+I$ is a zero divizor of $R/P$. By hypothesis, $\alpha(y+I) \subseteq I$ which means $\alpha(y) \in I$,  as claimed.
\end{proof}
The following lemma is needed in the proof of our next result.
\begin{lem} \label{16}
Let $I$ be a hyperideal of $R$. Then $I$ is prime if and only if $R/I$ has no zero divisors. 
\end{lem}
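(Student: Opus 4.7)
The plan is to prove both implications by translating between the coset condition in $R/I$ and a containment condition in $R$, and then invoking the definitions of prime hyperideal and zero divisor directly. The global standing hypothesis that $I$ is a \textbf{C}-hyperideal will be used to upgrade ``$(x\circ y)\cap I\neq\varnothing$'' to ``$x\circ y\subseteq I$'' since $x\circ y\in\mathbf{C}$; this is the only non-routine point in either direction.

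For the forward direction, assume $I$ is prime and suppose, toward a contradiction, that there is a zero divisor $y+I\neq 0_{R/I}$. By the definition of zero divisor in $R/I$, there exists $x+I\neq 0_{R/I}$ with $0_{R/I}\in(x+I)(y+I)=x\circ y+I$. This yields some $z\in x\circ y$ with $z\in I$, so $(x\circ y)\cap I\neq\varnothing$. The \textbf{C}-hyperideal assumption then gives $x\circ y\subseteq I$, and primeness of $I$ forces $x\in I$ or $y\in I$, contradicting that both cosets are nonzero.

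For the converse, assume $R/I$ has no zero divisors and let $x\circ y\subseteq I$ for some $x,y\in R$. Then $0_{R/I}\in x\circ y+I=(x+I)(y+I)$. If neither $x$ nor $y$ lies in $I$, both cosets are nonzero in $R/I$, and then $y+I$ is a zero divisor witnessed by $x+I$, contradicting the hypothesis. Hence $x\in I$ or $y\in I$, so $I$ is prime.

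The main ``obstacle,'' if one can call it that, is bookkeeping rather than substance: being careful that the product $(x+I)(y+I)$ in $R/I$ is the set of cosets $\{z+I:z\in x\circ y\}$, so that $0_{R/I}$ appearing in that product corresponds to $(x\circ y)\cap I\neq\varnothing$ and not automatically to $x\circ y\subseteq I$; the \textbf{C}-hyperideal hypothesis is exactly what bridges the two.
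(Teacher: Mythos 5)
Your proof is correct and follows essentially the same route as the paper's: translate the zero-divisor condition in $R/I$ into $x\circ y\subseteq I$, apply primeness of $I$, and reverse the translation for the converse. The only difference is that you make explicit the use of the standing \textbf{C}-hyperideal hypothesis to pass from $(x\circ y)\cap I\neq\varnothing$ to $x\circ y\subseteq I$, a step the paper performs silently.
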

\begin{proof}
Let $I$ be a prime  hyperideal of $R$. Let $I \neq x+I$ is a zero divisor of $R/I$. Then there exists $I \neq y+I$ such that $I \in (x+I)(y+I)=x \circ y +I$ which means $x \circ y \subseteq I$. Since $I$ is a prime hyperideal of $R$, then we get $x \in I$ or $y \in I$, contradicion. Conversely, let for some $x,y \in R$, $x \circ y \subseteq I$. Then $I \in x \circ y +I=(x+I)(y+I)$. Since $R/I$ has no zero divisors, then we have $I=x+I$ or $I=y+I$ which means $x \in I$ or $y \in I$. 
\end{proof}
\begin{theorem}
Let $I$ be a hyperideal of $R$. Then $I$ is $\alpha$-prime if and only if $I/ Ker \alpha$ is prime in $R/Ker \alpha$.
\end{theorem}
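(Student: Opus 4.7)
The plan is to pass to the quotient $R/Ker\,\alpha$, on which the induced endomorphism $\bar\alpha$ is injective, and to reduce the claim to the ordinary prime case by combining Lemma~\ref{16} with Theorem~\ref{17}. Writing $K := Ker\,\alpha$, the first preparatory step is to ensure $K \subseteq I$ so that $I/K$ is a bona fide hyperideal of $R/K$; in the forward direction this is exactly the content of Lemma~\ref{13}, while in the backward direction it is implicit in the hypothesis that $I/K$ makes sense as a hyperideal of $R/K$. Once this containment is in place, the third isomorphism theorem for multiplicative hyperrings provides a natural identification $(R/K)/(I/K) \cong R/I$, and the endomorphism $\alpha$ descends consistently at each level of the tower $R \to R/K \to (R/K)/(I/K)$.

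For the forward direction, suppose $I$ is $\alpha$-prime. Theorem~\ref{17} then gives that every nonzero zero divisor of $R/I$ is in $Ker\,\alpha$, i.e., is annihilated by the map induced by $\alpha$ on $R/I$. Transferring this statement through the isomorphism $(R/K)/(I/K) \cong R/I$, it becomes: every nonzero zero divisor of the double quotient is killed by the endomorphism that descends from $\bar\alpha$ on $R/K$. But $\bar\alpha$ on $R/K$ is injective by construction of $K$, so the only element it kills is $0$. Consequently $R/I$ has no nonzero zero divisors, and Lemma~\ref{16} yields that $I/K$ is prime in $R/K$.

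For the backward direction, assume $I/K$ is prime in $R/K$. Lemma~\ref{16} implies that $(R/K)/(I/K) \cong R/I$ has no nonzero zero divisors, and so every zero divisor of $R/I$ lies vacuously in $Ker\,\alpha$. Theorem~\ref{17} then yields that $I$ is $\alpha$-prime, completing the proof.

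The delicate point I expect to be the main obstacle lies in the forward direction: making rigorous the fact that the condition ``every zero divisor of $R/I$ is in $Ker\,\alpha$'' collapses, after passage to $R/K$, to ``every zero divisor is zero.'' The underlying reason is that quotienting by $K$ forces $\bar\alpha$ to be injective so that its kernel in the double quotient is trivial; formalizing this requires carefully tracking the descent of $\alpha$ through both quotient maps and verifying compatibility with the hyperoperation on cosets.
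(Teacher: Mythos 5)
Your overall route is the same as the paper's: reduce via Lemma~\ref{13} and the identification $R/I\cong (R/Ker\,\alpha)/(I/Ker\,\alpha)$ to the two characterizations in Lemma~\ref{16} and Theorem~\ref{17}; your backward direction follows that chain exactly. The forward direction, however, hinges on a claim that is false, and on two counts. First, the induced endomorphism $\bar{\alpha}$ of $R/Ker\,\alpha$ is \emph{not} injective "by construction": its kernel is $\{r+Ker\,\alpha \ \vert \ \alpha(r)\in Ker\,\alpha\}=Ker\,\alpha^{2}/Ker\,\alpha$, which is trivial only when $Ker\,\alpha^{2}=Ker\,\alpha$. Second, and more importantly, injectivity of $\bar{\alpha}$ on $R/Ker\,\alpha$ is not even the relevant property. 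Theorem~\ref{17} places the zero divisors of $R/I$ in the kernel of the endomorphism induced on $R/I$ itself (equivalently on $(R/Ker\,\alpha)/(I/Ker\,\alpha)$), and that kernel is $\{y+I \ \vert \ \alpha(y)\in I\}=\alpha^{-1}(I)/I$. Its triviality means $\alpha^{-1}(I)\subseteq I$, a condition on $I$ that does not follow from passing to $R/Ker\,\alpha$, nor from injectivity of $\alpha$ or of $\bar{\alpha}$. So the collapse you need, "every zero divisor of $R/I$ lying in $Ker\,\alpha$ is zero," is unsupported.

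The failure is genuine, not just a bookkeeping issue. Take $R=\mathbb{Z}[i]$ with the singleton hyperoperation $a\circ b=\{ab\}$ (a multiplicative hyperring in which every hyperideal is a {\bf C}-hyperideal), let $\alpha$ be complex conjugation (a good endomorphism with $Ker\,\alpha=\langle 0\rangle$), and let $I=\langle 2+i\rangle$. Then $R/I$ is a field, so its only candidate zero divisor is the zero coset, which certainly satisfies the kernel condition; yet $\alpha^{-1}(I)=\langle 2-i\rangle\neq I$ and $I$ is not $\alpha$-prime, since $1\circ(2+i)\subseteq I$ while $1\notin I$ and $\alpha(2+i)=2-i\notin I$. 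Thus the step your forward direction rests on cannot be extracted from the definition of $Ker\,\alpha$; it would require an additional hypothesis such as $\alpha^{-1}(I)\subseteq I$ (and the same example shows that the appeal to Theorem~\ref{17} silently misses the case $y\in I$ with $\alpha(y)\notin I$). For comparison, the paper's own proof is exactly the one-line chain you follow and leaves this bridge implicit; your attempt to close it via injectivity of $\bar{\alpha}$ is where the argument breaks.
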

\begin{proof}
By Lemma \ref{13}, we conclude that $R/I \cong \frac{R/Ker \alpha}{I/Ker \alpha}$. Now, the claim follows by Lemma \ref{16} and Theorem \ref{17}.
\end{proof}
\begin{definition}
A hyperring $R$ is called an $\alpha$-integral hyperdomain, if for all $x,y \in R$, $0 \in x \circ y$ implies that $x=0$ or $\alpha(y)=0$.
\end{definition}
\begin{theorem} \label{19}
Let $I$ be a hyperideal of $R$. Then $I$ is $\alpha$-prime if and only if $R/I$ is an $\alpha$-integral hyperdomain.
\end{theorem}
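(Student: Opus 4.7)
The plan is to translate each direction of the statement into a condition on cosets in $R/I$, using essentially the same passage between $x\circ y\subseteq I$ and $\overline{0}\in\bar x\cdot\bar y$ that drove the proof of Lemma \ref{16}. First I would note that because $I$ is assumed to be a \textbf{C}-hyperideal, the quotient hyperoperation $\bar x\cdot\bar y=(x\circ y)+I$ satisfies the key equivalence
\[
\overline{0}\in\bar x\cdot\bar y\iff(x\circ y)\cap I\neq\varnothing\iff x\circ y\subseteq I.
\]
Once this dictionary is in place, both implications reduce to directly unpacking definitions.

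For the forward direction, assume $I$ is $\alpha$-prime. By Theorem 3.3 we have $\alpha(I)\subseteq I$, so $\alpha$ descends to a good endomorphism $\bar\alpha$ of $R/I$ via $\bar\alpha(\bar y)=\overline{\alpha(y)}$. Given $\bar x,\bar y\in R/I$ with $\overline{0}\in\bar x\cdot\bar y$, the dictionary above yields $x\circ y\subseteq I$, and $\alpha$-primeness of $I$ gives $x\in I$ or $\alpha(y)\in I$, i.e. $\bar x=\overline{0}$ or $\bar\alpha(\bar y)=\overline{0}$. Hence $R/I$ is an $\alpha$-integral hyperdomain.

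For the converse, assume $R/I$ is an $\alpha$-integral hyperdomain, and suppose $x\circ y\subseteq I$ for some $x,y\in R$. The dictionary gives $\overline{0}\in\bar x\cdot\bar y$, so the hypothesis forces $\bar x=\overline{0}$ or $\overline{\alpha(y)}=\overline{0}$; equivalently $x\in I$ or $\alpha(y)\in I$. Thus $I$ is $\alpha$-prime.

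The main obstacle is bookkeeping rather than mathematics: I need to make sure the quotient hyperoperation is handled correctly (so that $\overline{0}\in\bar x\cdot\bar y$ really does mean $x\circ y\subseteq I$, where the \textbf{C}-hyperideal assumption is essential) and that the endomorphism $\alpha$ is interpreted consistently on $R/I$ in the forward direction, where Theorem 3.3 is invoked to guarantee $\alpha(I)\subseteq I$. Once these points are nailed down, the argument is a two-line symmetric verification and follows the same pattern already used in the proof of Theorem \ref{17} together with Lemma \ref{16}.
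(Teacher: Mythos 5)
Your proposal is correct and follows essentially the same route as the paper's own proof: translating $0_{R/I}\in(x+I)(y+I)$ into $x\circ y\subseteq I$ via the \textbf{C}-hyperideal assumption and then unpacking $\alpha$-primeness in both directions. Your explicit remark that $\alpha(I)\subseteq I$ (from the earlier theorem) makes the induced map on $R/I$ well defined is a small point the paper leaves implicit, but it does not change the argument.
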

\begin{proof}
Let the hyperideal $I$ of $R$ be $\alpha$-prime. Assume that $I \in (x+I)(y+I)=x \circ y+I$ for some $x,y \in R$. Then $x \circ y \subseteq I$. Therefore we get $x \in I$ or $\alpha(y) \in  I$, since $I$ is a $\alpha$-prime hyperideal of $R$. Hence we conclude that $x+I=I$ or $\alpha(y)+I=I$ which implies $x+I=I$ or $\alpha(y+I)=I$. Consequently, $R/I$ is an $\alpha$-integral hyperdomain. Conversely, Let $R/I$ be an $\alpha$-integral hyperdomain. Suppose that $x \circ y \subseteq I$ for some $x,y \in R$. Then $I \in x \circ y+I$. This means $I \in (x+I)(y+I)$. Thus we have $I=x+I$ or $I=\alpha(y+I)$, since $R/I$ is an $\alpha$-integral hyperdomain. This means $x \in I$ or  $\alpha(y) \in I$. Thus the hyperideal $I$ of $R$ is $\alpha$-prime.
\end{proof}
\begin{theorem} \label{20}
Let  $R_1$ and $R_2$ be two multiplicative hyperrings and  $f:R_1 \rightarrow R_2$  a good epimorphism and  $\alpha \in End(R_1) \cap End(R_2)$ such that $\alpha(f(r))=f(\alpha(r))$ for every $r \in R_1$. Let $I_1$ be a hyperideal of $R_1$ with $Ker \alpha \subseteq I_1$. Then the hyperideal $I_1$ is $\alpha$-prime if and only if the hyperideal $f(I_1)$ of $R_2$ is  $\alpha$-prime.
\end{theorem}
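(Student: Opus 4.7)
The plan is to prove the equivalence using two natural ingredients: the pullback behavior from Theorem \ref{12} (preimages of $\alpha$-prime hyperideals under good homomorphisms commuting with $\alpha$ are $\alpha$-prime), and the classical fact that for a surjective map of additive groups, $f^{-1}(f(I_1)) = I_1 + \operatorname{Ker} f$, which collapses to $I_1$ whenever $\operatorname{Ker} f \subseteq I_1$. The role of the hypothesis $\operatorname{Ker}\alpha \subseteq I_1$ is to guarantee exactly this collapse, so the image $f(I_1)$ encodes $I_1$ faithfully on both sides of the correspondence.

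For the forward direction, suppose $I_1$ is $\alpha$-prime and take $a \circ_2 b \subseteq f(I_1)$ with $a,b \in R_2$. Since $f$ is an epimorphism, lift to $a = f(x)$ and $b = f(y)$ with $x,y \in R_1$, so that $f(x \circ_1 y) = f(x) \circ_2 f(y) \subseteq f(I_1)$. This gives $x \circ_1 y \subseteq f^{-1}(f(I_1))$; I would then argue $f^{-1}(f(I_1)) = I_1$ from the kernel hypothesis, so $x \circ_1 y \subseteq I_1$. Applying $\alpha$-primeness of $I_1$ yields either $x \in I_1$, in which case $a = f(x) \in f(I_1)$, or $\alpha(y) \in I_1$, in which case, using $\alpha \circ f = f \circ \alpha$, $\alpha(b) = \alpha(f(y)) = f(\alpha(y)) \in f(I_1)$. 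Either way $f(I_1)$ is $\alpha$-prime.

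For the reverse direction, the shortest route is to invoke Theorem \ref{12} directly: if $f(I_1)$ is $\alpha$-prime in $R_2$, then $f^{-1}(f(I_1))$ is $\alpha$-prime in $R_1$; the kernel hypothesis again collapses this preimage to $I_1$, giving the conclusion. Alternatively one can argue from scratch by lifting and using the commutation $f \circ \alpha = \alpha \circ f$ to transfer the conclusion back to $R_1$.

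The main obstacle I anticipate is the identification $f^{-1}(f(I_1)) = I_1$ from the stated hypothesis $\operatorname{Ker}\alpha \subseteq I_1$. Strictly speaking the equality needs $\operatorname{Ker} f \subseteq I_1$; this is automatic when $f$ is the canonical projection $R_1 \twoheadrightarrow R_1/\operatorname{Ker}\alpha$ (the natural setting suggested by the earlier results, Theorem \ref{17} in particular, where $\operatorname{Ker} f = \operatorname{Ker}\alpha$), and more generally should be argued from the commutation relation $\alpha \circ f = f \circ \alpha$ applied to kernel elements together with the given containment. Once this identification is in hand, both directions are essentially bookkeeping with surjectivity and the $\alpha$-commutation.
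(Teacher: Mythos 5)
Your argument is essentially the paper's own: the forward direction lifts $a_2=f(x)$, $b_2=f(y)$, pulls $x\circ_1 y$ back into $I_1$ via the kernel containment, applies $\alpha$-primeness and the commutation $f\circ\alpha=\alpha\circ f$; the converse invokes Theorem \ref{12} together with the collapse $f^{-1}(f(I_1))=I_1$. Your stated collapse is exactly what the paper does elementwise (it takes $u\in x\circ_1 y$, finds $w\in I_1$ with $f(u)=f(w)$, and concludes $u-w\in \operatorname{Ker} f\subseteq I_1$). The one concern you flagged is the real issue, and you should not expect to resolve it the way you suggest: the containment $\operatorname{Ker} f\subseteq I_1$ cannot be derived from $\operatorname{Ker}\alpha\subseteq I_1$ and the commutation relation (take $\alpha=\mathrm{id}$, so $\operatorname{Ker}\alpha=\{0\}\subseteq I_1$ always, while $\operatorname{Ker} f$ is arbitrary; the classical example $I_1=9\mathbb{Z}$, $f:\mathbb{Z}\twoheadrightarrow \mathbb{Z}/3\mathbb{Z}$ shows the converse direction then fails). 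The paper's proof in fact silently uses $\operatorname{Ker} f\subseteq I_1$, so the hypothesis ``$\operatorname{Ker}\alpha\subseteq I_1$'' in the statement is best read as a misprint for ``$\operatorname{Ker} f\subseteq I_1$'' (consistent with the intended application where $f$ is the projection $R_1\twoheadrightarrow R_1/\operatorname{Ker}\alpha$, as you observed). With the hypothesis corrected in that way, your proof is complete and coincides with the paper's.
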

\begin{proof}
 Let $a_2 \circ b_2 \subseteq f(I_1)$ for some $a_2,b_2 \in R_2$. Then for some $a_1,b_1 \in R_1$ we have $f(a_1)=a_2$ and $f(b_1)=b_2$. So $f(a_1) \circ f(b_1)=f(a_1 \circ b_1) \subseteq f(I_1)$. Now, take any $u \in a_1 \circ b_1$. Then $f(u) \in f(a_1 \circ b_1) \subseteq f(I_1)$ and so there exists $w \in I_1$ such that $f(u)=f(w)$. This means that $f(u-w)=0$, that is, $u-w \in Ker f \subseteq I_1$ and then $u \in I_1$. Since $I_1$ is a {\bf C}-hyperideal of $R_1$, then we get $a_1 \circ b_1 \subseteq I_1$. Since $I_1$ is an $\alpha$-prime hyperideal of $R_1$, then we obtain $a_1 \in I_1$ or $\alpha(b_1) \in I_1$. This implies that $f(a_1)=a_2 \in f(I_1)$ or $\alpha(b_2)=\alpha(f(b_1))=f(\alpha(b_1)) \in f((I_1)$. Thus $f(I_1)$ is an $\alpha$-prime hyperideal of $R_2$. The converse part is follows by \ref{12}.
\end{proof}
In view of Theorem \ref{20}, we have the following result.
\begin{corollary}
Let $I$ and $J$ be two hyperideals of $R$ with $J \subseteq I$. Assume that $\alpha \in End(R)$ and $\alpha^\star$ is the induced mapping on $R/J$ from $\alpha$. Then $I$ is an $\alpha$-prime hyperideal of $R$ if and only if $I/J$ is an $\alpha^\star$-prime hyperideal of $R/J$.
\end{corollary}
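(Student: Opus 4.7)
The plan is to deduce this corollary directly from Theorem \ref{20} by choosing the right good epimorphism. Take $R_1 = R$, $R_2 = R/J$, and let $f = \pi : R \to R/J$ be the natural projection $r \mapsto r + J$. This $\pi$ is clearly a good epimorphism, and $f(I) = I/J$. The endomorphism in play on $R_1$ is $\alpha$ itself, while on $R_2$ it is the induced map $\alpha^\star$, which by hypothesis satisfies $\alpha^\star(r+J) = \alpha(r)+J$, i.e., $\alpha^\star \circ \pi = \pi \circ \alpha$. This is exactly the commutativity condition required in Theorem \ref{20} (identifying $\alpha$ on $R_2$ with $\alpha^\star$).

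Next I would check the remaining hypothesis that plays the role of ``$\ker f \subseteq I_1$'' in the proof of Theorem \ref{20}: here $\ker \pi = J$, and the standing assumption $J \subseteq I$ gives $\ker \pi \subseteq I$ at once. (Note that for $\alpha^\star$ to be well defined one needs $\alpha(J) \subseteq J$, which is built into the notion of ``induced mapping'' and should be tacitly assumed throughout.) With all the hypotheses of Theorem \ref{20} verified, I would invoke it to conclude that $I$ is $\alpha$-prime in $R$ if and only if $\pi(I) = I/J$ is $\alpha^\star$-prime in $R/J$, which is precisely the statement of the corollary.

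There is essentially no obstacle here beyond bookkeeping: the content of the result is already packaged in Theorem \ref{20}, and the corollary is the standard quotient specialization. If one wanted to avoid even appealing to Theorem \ref{20}, a self-contained argument would run in parallel: starting with $x \circ y \subseteq I$ in $R$, push down to $(x+J)\circ^\star(y+J) \subseteq I/J$ (using that $J \subseteq I$), apply $\alpha^\star$-primeness of $I/J$ to conclude $x+J \in I/J$ or $\alpha^\star(y+J) \in I/J$, and lift back using $J \subseteq I$ and $\alpha^\star(y+J) = \alpha(y)+J$ to get $x \in I$ or $\alpha(y) \in I$; the converse direction is the same computation run backwards. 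This is exactly the content of Theorem \ref{20} specialized to $f = \pi$, so I would present the proof as a one-line application of that theorem.
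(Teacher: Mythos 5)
Your proposal is correct and matches the paper's intended argument: the corollary is stated as an immediate consequence of Theorem \ref{20}, obtained exactly as you do by taking $f$ to be the canonical projection $\pi : R \to R/J$ (a good epimorphism with $\ker\pi = J \subseteq I$) and identifying the endomorphism on $R/J$ with the induced map $\alpha^\star$. Your additional remarks --- that the hypothesis used in the proof of Theorem \ref{20} is really $\ker f \subseteq I_1$, and that $\alpha(J)\subseteq J$ is needed for $\alpha^\star$ to be well defined --- are sound and, if anything, make the deduction more careful than the paper's one-line appeal.
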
 
Let $(R_1,+_1,\circ_1)$ and $(R_2,+_2,\circ_2)$ be two multiplicative hyperrings with non zero identity. \cite{ul} Recall $(R_1 \times R_2, +,\circ)$ is a multiplicative hyperring with the operation $+$ and the hyperoperation $\circ$ are defined respectively as

$(x_1,x_2)+(y_1,y_2)=(x_1+_1y_1,x_2+_2y_2)$ 
and

$(x_1,x_2) \circ (y_1,y_2)=\{(x,y) \in R_1 \times R_2 \ \vert \ x \in x_1 \circ_1 y_1, y \in x_2 \circ_2 y_2\}$. \\
Assume that  $\alpha_1 \in End(R_1)$ and $\alpha_2 \in End(R_2)$. We  define the map $\bar{\alpha}: R_1 \times R_2 \longrightarrow R_1 \times R_2$ by $\bar{\alpha}(r_1,r_2)=(\alpha_1(r_1), \alpha_2(r_1))$. It  is easy to see that $\bar{\alpha} \in End(R_1 \times R_2)$.

\begin{theorem} \label{times}
Let $(R_1, +_1,\circ _1)$ and $(R_2,+_2,\circ_2)$ be two multiplicative hyperrings with non zero identity such that $\alpha_1 \in End(R_1)$ and $\alpha_2 \in End(R_2)$. Let $I_1$ be a hyperideal of $R_1$. Then  $I_1$ is an $\alpha_1$-prime hyperidea of  $R_1$   if and only if $I_1 \times R_2$ is an $\bar{\alpha}$-prime hyperideal of $R_1 \times R_2$.
\end{theorem}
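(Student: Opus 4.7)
The plan is to verify both implications by unwinding the definition of the product hyperoperation. Recall that $(x_1,x_2)\circ (y_1,y_2) = \{(a,b) : a \in x_1 \circ_1 y_1,\ b \in x_2 \circ_2 y_2\}$, and that $\bar{\alpha}(r_1,r_2) = (\alpha_1(r_1),\alpha_2(r_2))$. Both coordinates of the hyperproduct are non-empty (since the $\circ_i$ are hyperoperations), which is the small fact that makes the coordinate projections behave well.

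For the forward direction, assume $I_1$ is $\alpha_1$-prime and suppose $(x_1,x_2)\circ (y_1,y_2) \subseteq I_1\times R_2$ for some $(x_1,x_2),(y_1,y_2)\in R_1\times R_2$. I would fix any $b \in x_2\circ_2 y_2$ (non-empty) and observe that for every $a \in x_1\circ_1 y_1$ the pair $(a,b)$ lies in the left-hand side, hence in $I_1\times R_2$, forcing $a\in I_1$. Thus $x_1\circ_1 y_1 \subseteq I_1$, and $\alpha_1$-primeness of $I_1$ gives $x_1\in I_1$ or $\alpha_1(y_1)\in I_1$. In the first case $(x_1,x_2)\in I_1\times R_2$, and in the second case $\bar{\alpha}(y_1,y_2) = (\alpha_1(y_1),\alpha_2(y_2)) \in I_1\times R_2$, as required.

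For the converse, assume $I_1\times R_2$ is $\bar{\alpha}$-prime and take $x_1,y_1\in R_1$ with $x_1\circ_1 y_1\subseteq I_1$. The key move is to lift this inclusion to $R_1\times R_2$ using the identity $e_2$ of $R_2$: consider $(x_1,e_2)\circ (y_1,e_2)$, whose first coordinates lie in $x_1\circ_1 y_1\subseteq I_1$ and whose second coordinates lie trivially in $R_2$, so $(x_1,e_2)\circ(y_1,e_2)\subseteq I_1\times R_2$. Applying $\bar{\alpha}$-primeness yields $(x_1,e_2)\in I_1\times R_2$ or $\bar{\alpha}(y_1,e_2) = (\alpha_1(y_1),\alpha_2(e_2))\in I_1\times R_2$, which translates to $x_1\in I_1$ or $\alpha_1(y_1)\in I_1$. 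Hence $I_1$ is $\alpha_1$-prime.

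No genuine obstacle is expected: the argument is a direct definition-chase. The only subtle point is the forward direction, where one must recall that a set inclusion of the form $A\times B \subseteq C\times D$ (with $A,B$ non-empty) forces $A\subseteq C$ and $B\subseteq D$; the non-emptiness of $x_2\circ_2 y_2$ is what lets me extract $x_1\circ_1 y_1\subseteq I_1$ from the component structure of the hyperproduct. In the converse direction, the existence of the identity $e_2$ in $R_2$ is what makes the lifting step possible and is the only place where the hypothesis of non-zero identity in $R_2$ plays any role.
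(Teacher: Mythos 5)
Your proof is correct and follows essentially the same route as the paper: the forward direction extracts $x_1\circ_1 y_1\subseteq I_1$ from the componentwise description of the product hyperoperation, and the converse lifts $x_1\circ_1 y_1\subseteq I_1$ to $(x_1,1_{R_2})\circ(y_1,1_{R_2})\subseteq I_1\times R_2$ exactly as in the paper (which merely phrases the converse as a contradiction). Your explicit non-emptiness remark in the forward direction is a welcome bit of extra care; note only that in the converse any element of $R_2$ would serve in the second coordinate, so the identity is a convenience rather than a necessity.
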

\begin{proof}
($\Longrightarrow$) Let $(x_1,x_2) \circ (y_1,y_2) \subseteq I_1 \times R_2$ for some $(x_1,x_2), (y_1,y_2) \in R_1 \times R_2$. This means $x_1 \circ_1 y_1 \subseteq I_1$. Since $I_1$ is a $\alpha_1$-prime hyperideal of $R_1$, then we get $x_1 \in I_1$ or $\alpha_1(y_1) \in I_1$. This implies that $(x_1,x_2) \in I_1\times R_2$ or $\bar{\alpha}(y_1,y_2)=(\alpha_1(y_1),\alpha_2(y_2) \in I_1 \times R_2$. Consequently, $I_1 \times R_2$ is an $\bar{\alpha}$-prime hyperideal of $R_1 \times R_2$.

($\Longleftarrow$) Assume on the contrary that $I_1$ is not a $\alpha_1$-prime hyperideal of $R_1$. So $x_1 \circ_1 y_1 \subseteq I_1$ with $x_1, y_1 \in R_1$ implies that $x_1 \notin I_1$ and $\alpha_1(y_1) \notin I_1$. It is clear that $(x_1,1_{R_2})\circ (y_1, 1_{R_2}) \subseteq I_1 \times R_2$. Since $I_1 \times R_2$ is an $\bar{\alpha}$-prime hyperideal of $R_1 \times R_2$, then we have $(x_1,1_{R_2}) \in I_1 \times R_2$ or $\bar{\alpha}(y_1,1_{R_2}) \in I_1 \times R_2$ which means $(x_1,1_{R_2}) \in I_1 \times R_2$ or $(\alpha_1(y_1), \alpha_2(1_{R_2})\in I_1 \times R_2$. Hence we get $x_1 \in I_1$ or $\alpha_1(y_1) \in I_1$ which is a contradiction. Thus, $I_1$ is an $\alpha_1$-prime hyperideal of $R_1$. 
\end{proof}
\begin{theorem}
Let $(R_1, +_1,\circ 1)$ and $(R_2,+_2,\circ_2)$ be two multiplicative hyperrings with non zero identity such that $\alpha_1 \in End(R_1)$ and $\alpha_2 \in End(R_2)$. Let $I_1$ and $I_2$ be some hyperideals of $R_1$ and $R_2$, respectively. Then the
following statements are equivalent:
\begin{itemize}
\item[\rm{(1)}]~ $I_1 \times I_2$ is an $\bar{\alpha}$-prime hyperideal of $R_1 \times R_2$.
\item[\rm{(2)}]~ $I_1=R_1$ and $I_2$ is an $\alpha_2$-prime hyperideal of $R_2$ or $I_2=R_2$ and $I_1$ is an $\alpha_1$-prime hyperideal of $R_1$.
\end{itemize}
\end{theorem}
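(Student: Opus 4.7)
My plan is to reduce the equivalence to Theorem \ref{times} and its symmetric counterpart (with $R_1$ and $R_2$ interchanged), handling the remaining situation by a direct contradiction argument. For $(2) \Rightarrow (1)$, each sub-case of $(2)$ makes $I_1 \times I_2$ of the form $R_1 \times I_2$ or $I_1 \times R_2$, so Theorem \ref{times} (or its swapped version) immediately yields that $I_1 \times I_2$ is $\bar{\alpha}$-prime.

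For $(1) \Rightarrow (2)$, because $\bar{\alpha}$-primeness requires properness, at least one of $I_1, I_2$ must be proper. When $I_1 = R_1$, the swapped form of Theorem \ref{times} applied to $R_1 \times I_2 = I_1 \times I_2$ gives that $I_2$ is $\alpha_2$-prime; the case $I_2 = R_2$ is analogous. The outstanding task is to rule out the case where both $I_1$ and $I_2$ are proper. Assume this, toward contradiction. Since $0 \in I_i$ and each $I_i$ is a hyperideal, $1_{R_1} \circ_1 0_{R_1} \subseteq I_1$ and $0_{R_2} \circ_2 1_{R_2} \subseteq I_2$, so $(1_{R_1}, 0_{R_2}) \circ (0_{R_1}, 1_{R_2}) \subseteq I_1 \times I_2$. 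Applying $\bar{\alpha}$-primeness yields either $(1_{R_1}, 0_{R_2}) \in I_1 \times I_2$, hence $1_{R_1} \in I_1$, contradicting properness of $I_1$; or $\bar{\alpha}(0_{R_1}, 1_{R_2}) = (0, \alpha_2(1_{R_2})) \in I_1 \times I_2$, which, using that $\alpha_2$ preserves the identity, gives $1_{R_2} \in I_2$, contradicting properness of $I_2$.

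The main obstacle is this closing step: passing from $\alpha_2(1_{R_2}) \in I_2$ to $1_{R_2} \in I_2$ implicitly uses that $\alpha_2$ fixes the identity, a natural but unstated convention on good endomorphisms. Without it, the same product-based argument only yields $\alpha_2(R_2) \subseteq I_2$ (since $\alpha_2(r) \in \alpha_2(r) \circ \alpha_2(1_{R_2}) \subseteq I_2$ by the hyperideal property applied to $\alpha_2(1_{R_2}) \in I_2$), which is not itself contradictory. One should therefore verify or assume that the good endomorphisms $\alpha_i$ are unital before invoking this final inference.
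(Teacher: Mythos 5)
Your argument follows the same basic route as the paper---both directions are reduced to Theorem \ref{times} and its symmetric counterpart---but it is genuinely more complete than what the paper writes. The paper's proof of $(1)\Rightarrow(2)$ consists of ``Assume that $I_1=R_1$'' followed by a citation of Theorem \ref{times}; it never argues that one of $I_1$, $I_2$ must equal the whole hyperring, which is precisely the nontrivial content of this direction. You supply that missing dichotomy: from $0_{R_i}\in I_i$ you get $(1_{R_1},0_{R_2})\circ(0_{R_1},1_{R_2})\subseteq I_1\times I_2$, and $\bar{\alpha}$-primeness then forces $1_{R_1}\in I_1$ or $\alpha_2(1_{R_2})\in I_2$, so both hyperideals cannot be proper. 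Your closing caveat is also substantive rather than cosmetic: without an assumption that the $\alpha_i$ fix the identity (or at least that $\alpha_i(1_{R_i})\notin I_i$), the final inference $\alpha_2(1_{R_2})\in I_2\Rightarrow 1_{R_2}\in I_2$ fails, and in fact the theorem itself fails as stated. For instance, take $R_1=R_2=(\mathbb{Z},+,\circ)$ with $a\circ b=\{ab\}$ and let $\alpha_1=\alpha_2$ be the zero map, which is a good endomorphism; then $7\mathbb{Z}$ and $5\mathbb{Z}$ are vacuously $\alpha_i$-prime and $7\mathbb{Z}\times 5\mathbb{Z}$ is vacuously $\bar{\alpha}$-prime, so (1) holds while (2) fails since both factors are proper. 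So your proof is correct once a unitality hypothesis on the $\alpha_i$ is added, and it repairs exactly the two points (the dichotomy and the hidden hypothesis) on which the paper's two-line proof is silent.
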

\begin{proof}
(1) $\Longrightarrow$ (2) Assume that $I_1=R_1$. Then $I_2$ is a $\alpha_2$-primary hyperideal of $R_2$, by Theorem \ref{times}.

(2) $\Longrightarrow$ (1) This can be proved by using Theorem \ref{times}.
\end{proof}
\begin{example} 
Suppose that $(\mathbb{Z},+,.)$ is the ring of integers. Then $(\mathbb{Z},+,\circ_1)$ is a multiplicative hyperring with
a hyperoperation $a \circ_1 b =\{ab,7ab\}$. Also, $(\mathbb{Z},+,\circ_2)$ is a multiplicative hyperring with
a hyperoperation $a \circ_2 b =\{ab,5ab\}$. Note that $(\mathbb{Z} \times \mathbb{Z},+,\circ)$ is a multiplicative hyperring with a hyperoperation $(a,b) \circ (c,d)=\{(x,y) \in \mathbb{Z} \times \mathbb{Z} \ \vert \ x \in a \circ_1 c , y \in b \circ_2
d \}$. Let $\alpha_1$ and $\alpha_2$ are the identity maps on $(\mathbb{Z},+,\circ_1)$ and  $(\mathbb{Z},+,\circ_2)$, respectively. Clearly, $7\mathbb{Z}=\{7t \ \vert \ t \in \mathbb{Z}\}$ and $5\mathbb{Z}=\{5t \ \vert \ t \in \mathbb{Z}\}$ are  $\alpha_1$-prime and  $\alpha_2$-prime of $(\mathbb{Z},+,\circ_1)$ and $(\mathbb{Z},+,\circ_2)$, respectively. Since $(5,0) \circ (0,7) \subseteq 7\mathbb{Z} \times 5 \mathbb{Z}
$ but $(5,0),(0,7) \notin 7\mathbb{Z} \times 5\mathbb{Z}$ and $\bar{\alpha}(5,0),\bar{\alpha}(0,7) \notin 7\mathbb{Z} \times 5\mathbb{Z}$, then $7\mathbb{Z} \times 5\mathbb{Z}$ is not a $\bar{\alpha}$-prime hyperideal of $\mathbb{Z} \times \mathbb{Z}$.
\end{example}



\begin{thebibliography}{99}

\bibitem{ameri}
R. Ameri, A. Kordi, S. Hoskova-Mayerova, Multiplicative hyperring of fractions and coprime hyperideals, {\it An. St. Univ. ovidious Constanta,}  {\bf 25}(2017), No. 1, 5-23.

\bibitem{ameri2}
R. Ameri, M. Norouzi, On commutative hyperrings, {\it Int. Journal of Algebraic Hyperstructures and its Applications, } {\bf 1}(1) (2014) 45-58.

\bibitem{ameri3}
R. Ameri and M. Norouzi, New fundamental relation of hyperrings, {\it European Journal of Combinatorics,} {\bf 34} (2013) 884-891. 

\bibitem{ameri4}
R. Ameri and M. Norouzi, Prime and primary hyperideals in Krasner, {\it European Journal
of Combinatorics}, {\bf 34} (2013) 379-390.

\bibitem{ameri5}
R. Ameri, A. Kordi, On regular multiplicative hyperrings, {\it European Journal of Pure and Applied  Mathematics,} {\bf 9}(4)(2016) 402-418.

\bibitem{ameri6}
R. Ameri, A. Kordi, Clean multiplicative hyperrings, {\it Italian Journal of Pure and Applied Mathematics}, (35) (2015) 625-636. 

\bibitem{akray}
I. Akray, H. M. Mohammad-salih, $\alpha$-prime ideals, {\it Journal of Mathematical Extension} {\bf 16}(1)(2022) 1-10.

\bibitem{anb}
M. Anbarloei, Unifing the prime and primary hyperideals under one frame in
a Krasner (m, n)-hyperring, {\it Comm. Algebra} (2021) DOI:
10.1080/00927872.2021.1897988.

\bibitem{f13}
A. Asokkumar, M. Velrajan. A radical property of hyperrings, {\it Italian Journal of Pure and Applied Mathematics,} {\bf 29} (2012) 301-308.


\bibitem{Ay}
E. Ay, G. Yesilot, D. Sonmez, $\delta$-Primary hyperideals on commutative hyperrings, {\it International Journal Mathematics and Mathematical Sciences}  (2017): Article ID 5428160, 4 pages.


\bibitem{bed}
C. Beddani, W. Messirdi, 2-prime ideals and their applications, {\it  Journal of Algebra and Its Applications} (2015) DOI: 10.1142/S0219498816500511.

\bibitem{f1}
P. Corsini, Prolegomena of hypergroup theory, Second ed., Aviani Editore, 1993.

\bibitem{f2}
P. Corsini, V. Leoreanu, Applications of hyperstructures theory, {\it Adv. Math.,} Kluwer Academic Publishers, 2003.

\bibitem{f3}
J. Chvalina, J., S. Krehlik, M. Novak, Cartesian composition and the problem of generalizing the MAC condition to quasi- multiautomata, {\it An.
Stiint. Univ. Ovidius Constanta Ser. Mat.,} {\bf 24}(3) (2016) 79-100.

\bibitem{f4}
I. Cristea, S. Jancic-Rasovic, Compositions hyperrings, {\it An. Stiint. Univ. Ovidius Constanta Ser. Mat.,} {\bf 21}(2) (2013) 81-94.

\bibitem{f5}
I. Cristea, Regularity of intuitionistic fuzzy relations on hypergroupoids, {\it An. Stiint. Univ. Ovidius Constanta Ser. Mat.,} {\bf 22}(1) (2014)
105-119.

\bibitem{das}
U. Dasgupta, On prime and primary hyperideals of a multiplicative hyperring, {\it Annals of the Alexandru Ioan Cuza
University-Mathematics,} LVIII (1) (2012) 19-36.

\bibitem{x4}
B. Davvaz, S. Mirvakili, On $\alpha$-relation and transitive condition of $\alpha$, {\it Commun. Algebra}, {\bf 36} (5) (2008) 1695-1703.



\bibitem{f6}
B. Davvaz, T. Vougiouklis, Commutative rings obtained from hyperrings ($H_v$-rings) with $\alpha^\star$-relations, {\it Comm. Algebra,} {\bf 35} (2007) 3307-3320.

\bibitem{f10}
B. Davvaz, V. Leoreanu-Fotea, Hyperring theory and applications, {\it International Academic
Press}, USA, 2007.
  

 \bibitem{x2}
D. Freni, A new characterization of the derived hypergroup via strongly
regular equivalences, {\it Comm. Algebra}, {\bf 30} (8) (2002) 3977-3989.
 
  \bibitem{x3}
 D. Freni, Strongly transitive geometric spaces: Applications to hypergroups
and semigroups theory, {\it Comm. Algebra}, {\bf 32} (2004) 969-988.
 
 
 
\bibitem{far} 
 F. Farzalipour, P. Ghiasvand, Semiprime hyperideals in multiplicative hyperrings, {\it 51th Annual Iranian Mathematics conference,} {\bf 1} (2021) 801-804.
 
 
 
 \bibitem{x1}
 M. Koskas, Groupoids, demi-groupes et hypergroupes, {\it J. Math. Pures
Appl.,} {\bf 49} (1970) 155-192.
 
\bibitem{f11}
M. Krasner, A class of hyperrings and hyperﬁelds, {\it International Journal of Mathematics
and Mathematical Sciences}, {\bf 2} (1983) 307-312. 


\bibitem{marty}
F. Marty, Sur une generalization de la notion de groupe, In: {\it 8iem Congres des Mathematiciens Scandinaves, Stockholm}, (1934) 45-49.

\bibitem{f7}
C. G. Massouros, On the theory of hyperrings and hyperfields, {\it Algebra i Logika,} {\bf 24} (1985) 728-742.



\bibitem{f17}
S. Mirvakili, B. Davvaz, Constructions of $(m,n)$-hyperrings, {\it Matematicki Vesnik,} {\bf  67} (1) (2015) 1-16.

\bibitem{f18}
S. Mirvakili, B. Davvaz, Relations on Krasner $(m,n)$-hyperrings, {\it Eur. J. Comb.} {\bf 31} (2010) 790-802.

\bibitem{f12}
A. Nakassis. Expository and Survey Article Recent Result in hyperring and Hyperﬁeld Theory,
{\it International Journal of Mathematics and Mathematical Sciences}, {\bf 11}(2)(1988) 209-220.

\bibitem{f8}
M. Novak, n-ary hyperstructures constructed from binary quasi-orderer semigroups, {\it An. Stiint. Univ. Ovidius Constanta Ser. Mat.,} {\bf 22} (3)
(2014) 147-168.

\bibitem{f16}
R. Procesi, R. Rota, On some classes of hyperstructures, {\it Combinatorics
Discrete Math.,} 208/209 (1999) 485-497.


\bibitem{f14}
R. Rota, Sugli iperanelli moltiplicativi, {\it Rend. Di Math., Series} VII, {\bf 4} (1982) 711-724.

\bibitem{x5}
S. Spartalis, T. Vougiouklis, The fundamental relations on $H_v$-rings,
{\it Math. Pure Appl.,} {\bf 13} (1994) 7-20.


\bibitem{gu}
G. Ulucak, Kocaeli, E. Celikel, Gaziantep, $(\delta,2)$-primary ideals in commutative ring, {\it Czechoslovak Mathematical Journal,}  (2020) DOI: 10.21136/CMJ.2020.0146-19.

\bibitem{f9}
T. Vougiouklis, Hyperstructures and their representations, Hadronic Press, Inc., Palm Harber, USA, 1994.


\bibitem{f15}
T. Vougiouklis, The fundamental relation in hyperrings. The general hyperfield, In: Proceedings
of fourth international congress on algebraic hyperstructures and applications (AHA
1990). World Scientific, (1991) 203-211.

\end{thebibliography}
\end{document}